\journal{Information and Computation}
\begin{document}

\newcommand{\Z}{{\mathbb Z}}
\newcommand{\N}{{\mathbb N}}
\newcommand{\Hi}{{\mathbb H}}
\newcommand{\R}{{\mathbb R}}
\newcommand{\Q}{{\mathbb Q}}
\newcommand{\ICG}{\mathrm{ICG}}
\newcommand{\WCG}{\mathrm{WCG}}
\newcommand{\WICG}{\mathrm{WICG}}

\newtheorem{theorem}{\bf Theorem}[section]
\newtheorem{corollary}[theorem]{\bf Corollary}
\newtheorem{lemma}[theorem]{\bf Lemma}
\newtheorem{proposition}[theorem]{\bf Proposition}
\newtheorem{conjecture}[theorem]{\bf Conjecture}
\newtheorem{remark}[theorem]{\bf Remark}
\newtheorem{problem}[theorem]{\bf Problem}
\newtheorem{definition}[theorem]{\bf Definition}

\newcommand{\comment}[1]{\ $[\![${\normalsize #1}$]\!]$ \ }
\newcommand{\QED} {\hfill$\square$}

\newenvironment{proof} {\par \noindent \textbf{Proof. }}{\QED \par \bigskip \par}
\def\slika #1{\begin{center} \epsffile{#1} \end{center}}

\begin{frontmatter}



\title{Maximal diameter of integral circulant graphs}


\author[address1]{Milan Ba\v si\'c\corref{mycorrespondingauthor}}
\cortext[mycorrespondingauthor]{Corresponding author}
\ead{basic\_milan@yahoo.com}

\author[address2]{Aleksandar Ili\'c}
\ead{aleksandari@gmail.com}
\author[address1]{Aleksandar Stamenkovi\'c}
\ead{aca@pmf.ni.ac.rs}

\address[address1]{Faculty of Sciences and Mathematics, University of Ni\v{s}, Serbia}
\address[address2]{Facebook Inc, Menlo Park, California,USA}
\begin{abstract}
Integral circulant graphs are proposed  as
models for quantum spin networks that permit a quantum phenomenon called perfect state transfer.
Specifically, it is important to know how
far information can potentially be transferred between nodes of the
quantum networks modelled by integral circulant graphs and this task is related to calculating the maximal diameter of a graph.
The integral circulant
graph $\ICG_n (D)$ has the vertex set $Z_n = \{0, 1, 2, \ldots, n -
1\}$ and vertices $a$ and $b$ are adjacent if $\gcd(a-b,n)\in D$,
where $D \subseteq \{d : d \mid n,\ 1\leq d<n\}$.
Motivated by the result on the upper bound of the diameter of $\ICG_n(D)$ given in [N. Saxena, S. Severini, I. Shparlinski, \textit{Parameters of
integral circulant graphs and periodic quantum dynamics}, International Journal of Quantum Information 5 (2007), 417--430],
 according to which $2|D|+1$ represents one such bound,
 in this paper we prove that the maximal value of the diameter of the integral circulant graph $\ICG_n(D)$ of a given order $n$ with its prime factorization $p_1^{\alpha_1}\cdots p_k^{\alpha_k}$, is equal to $r(n)$ or $r(n)+1$, where  $r(n)=k + |\{ i \ | \alpha_i> 1,\ 1\leq i\leq k \}|$, depending on whether $n\not\in 4\N+2$ or not, respectively. Furthermore, we show that, for a given order $n$, a divisor set $D$ with $|D|\leq k$ can always be found such that this bound is attained.
Finally, we calculate the maximal diameter in the class of integral circulant graphs of a given order $n$
and cardinality of the divisor set $t\leq k$ and characterize all extremal graphs.
We actually show that the maximal diameter can have
the values $2t$, $2t+1$, $r(n)$ and $r(n)+1$ depending on the values of $t$ and $n$.
This way we further improve the upper bound of Saxena, Severini and Shparlinski and we also characterize all graphs whose diameters are equal to $2|D|+1$, thus generalizing a result in the above mentioned paper. 
\end{abstract}

\begin{keyword}
Integral circulant graphs \sep Diameter \sep Chinese Remainder Theorem \sep Quantum networks 
\MSC 05C12 \sep 11A07 \sep 81P45

\end{keyword}

\end{frontmatter}


\section{Introduction}
\label{S:1}

Circulant graphs are Cayley graphs over a cyclic group. A graph is
called integral if all the eigenvalues of its adjacency matrix are
integers. In other words, the corresponding adjacency matrix of a
circulant graph is the circulant matrix (a special kind of a Toeplitz
matrix where each row vector is rotated one element to the right
relative to the preceding row vector).  Integral graphs are
extensively studied in the literature and there has been a vast
research on some types of classes of graphs with integral spectrum.
The interest for circulant graphs in graph theory and applications
has grown during the last two decades. They appear in coding theory,
telecommunication network, VLSI design, parallel and distributed computing (see \cite{hwang03} and references therein).

Integral circulant graphs  have found important applications in molecular
chemistry for modeling energy-like quantities such as the heat of
formation of a hydrocarbon \cite{Ilic09,IlBa11,RaVe09,SaSa12}.
Recently there has been a vast research on the interconnection
schemes based on circulant topology -- circulant graphs represent an
important class of interconnection networks in parallel and
distributed computing (see \cite{hwang03}). Recursive circulants are proposed as an interconnection structure
for multicomputer networks \cite{PaCh00}.  While
retaining the attractive properties of hypercubes such as
node-symmetry, recursive structure, connectivity etc., these graphs
achieve noticeable improvements in diameter.

As circulants possess many
interesting properties (such as vertex transitivity called mirror
symmetry), they are applied in quantum information
transmission and proposed as models for quantum spin networks that
permit the quantum phenomenon called perfect state transfer
\cite{Ba10,Ba14,severini}. In the quantum communication scenario, the
important feature of this kind of quantum graphs (especially those
with integral spectrum) is the ability of faithfully transferring
quantum states without modifying the network topology. More precisely, these
networks consist of $n$ qubits where some pairs of qubits are
coupled via XY-interaction and are described by an undirected graph $G$ on a vertex set
$V(G)=\{1,2,\ldots,n\}$, where there is an edge between
vertices $i$ and $j$ if $i$-th and $j$-th qubit are coupled. Formally, to each vertex $i \in V$ , one can associate a
Hilbert space ${\cal H}_i\simeq C^2$, so that the Hilbert space associated with $G$ is given by
$ {\cal H}_G = \bigotimes_{i\in V(G)} {\cal H}_i =  (C^2)^{\otimes n}.$
 In \cite{fizicarski} a simple XY coupling is considered  such that the dynamics of the system is
governed by the XY Hamiltonian

$$ \label{eq:weighted Hamiltonian} H_G=\frac 1
2 \sum_{(i,j)\in
E} \sigma_i^x\sigma_j^x+\sigma_i^y\sigma_j^y,
$$
where the symbols $\sigma_i^x$ , $\sigma_i^y$ and $\sigma_i^z$
i denote the Pauli matrices acting on the Hilbert
space ${\cal H}_i$. Since the operator
of total $z$ component of the spin $\sigma^z_{tot}=\sum_{i=1}^n
\sigma_i^z$ commutes with $H_G$, the Hilbert space $\cal H_G$ is then decomposed into
invariant subspaces, each of which is a distinct eigenspace of the operator $\sigma^z_{tot}$.
For the purpose of quantum state transfer, it suffices to restrict our attention to
the single excitation subspace ${\cal S}_G$, which is the $n$-dimensional eigenspace of $\sigma^z_{tot}$,
corresponding to the eigenvalue $1 - n/2$ . The restriction of the Hamiltonian $H_G$ to the
subspace $\mathcal{S}_G$ is an $n \times n$ matrix identical to the
adjacency matrix $A_G$ of the graph $G$ and hence the time evolution operator can be written in the form
$F(t)=\exp(\i A_Gt)$. {\it Perfect state transfer} between different vertices
(qubits) $i$ and $j$ ($1 \leq i,j \leq n$) is obtained in time
$\tau$, if $|F(\tau)_{ij}|= e_i^T F(t)\ e_j =1$, where $e_i=(0,\ldots,0,1_i,0,\ldots,0)$ is the standard basis in ${\R}^n$ (if $|F(\tau)_{ii}|=1$, we say the graph is periodic at vertex $i$).
Using the spectral decomposition of $A_G$ we represent the perfect state transfer in terms its eigenvalues $\lambda_k$ and eigenvectors $u_k$, $1\leq k\leq n$, as
$ e_i^T (\sum_{k=1}^{n} e^{- i\lambda_k t}u_ku_k^T)\ e_j =1$.
In \cite{severini} was shown
that a quantum network whose hamiltonian is identical to the
adjacency matrix of a circulant graph is periodic if and only if all
eigenvalues of the graph are integers (that is, the graph is
integral). Therefore, circulant graphs having perfect state transfer must be
integral circulant graphs.

In this way we see that both combinatorial theory and the theory of graph spectra (spectral theory of its adjacency matrix) are relevant for the problems that we consider in
quantum computing, but also in the fields such as multiprocessor interconnection networks or complex networks.
Therefore, various spectral and combinatorial properties of integral circulant graphs were
recently investigated especially having in mind the study of certain parameters useful
for modeling  a good quantum (or in general complex) network that allows periodic dynamics.
 It was observed that integral circulant graphs
represent very reliable networks, meaning that the vertex
connectivity of these graphs  is equal to the degree of their
regularity. Moreover, for even orders they are bipartite \cite{severini} -- note that many of the
proposed networks mainly derived from the hypercube structure by
twisting some pairs of edges (twisted cube, crossed cub, multiply
twisted cube, M\"{o}bius cube, generalized twisted cube) are
nonbipartite. Other important network metrics of integral circulant graphs are analyzed as well, such as the degree, chromatic number, the
clique number, the size of the automorphism group, the size of the longest
cycle, the number of induced cycles of prescribed order (\cite{basic08,bail11,berrizbeitia04,fuchs05,Ilic10,klotz07}).

In this paper we continue the study of properties of circulant networks  relevant for
the purposes of information transfer.  Specifically, it would be interesting to know how
far information can potentially be transferred between nodes of the
networks modeled by the graph. So, it is important to know
the maximum length of all shortest paths between
any pair of nodes. Moreover, for a fixed number of nodes in the network, a larger diameter
potentially implies a larger maximum distance between nodes for which (perfect) transfer is possible (communication distance).
Therefore, for a given order of a circulant graph the basic question is to find the circulant graph which maximizes the diameter.
The sharp upper bound of the diameter of a graph of a given order is important for estimating the degradation
of performance of the network obtained by deleting a set of a certain number of vertices.


Throughout the paper we let $\ICG_n(D)$ be an arbitrary integral circulant graph of order $n$ and set of divisors $D$.
In \cite{severini}, the following sharp bounds on the diameter of integral circulant graphs are obtained

\begin{theorem}
Let $t$ be the size of the smallest set of additive generators
of $Z_n$ contained in $D$. Then
$$t \leq diam(\ICG_n(D)) \leq 2t + 1.$$ \QED
\end{theorem}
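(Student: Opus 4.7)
\bigskip

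\noindent\textbf{Proof plan.} The strategy is to prove the two inequalities separately, exploiting the Cayley-graph structure of $\ICG_n(D)$ with connection set $S=\{x\in Z_n:\gcd(x,n)\in D\}$.

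For the lower bound $t\le diam(\ICG_n(D))$, I would consider a shortest path from $0$ to $1$ of length $\ell\le diam(\ICG_n(D))$. Reading off its edges yields a congruence $1\equiv s_1+s_2+\cdots+s_\ell\pmod n$ in which each $d_j:=\gcd(s_j,n)$ lies in $D$. Setting $g=\gcd(d_1,\ldots,d_\ell)$, the divisibility $g\mid s_j$ for every $j$ gives $g\mid s_1+\cdots+s_\ell$, and together with $g\mid n$ this forces $g\mid 1$, whence $g=1$. Hence the distinct $d_j$'s constitute a generating subset of $D$ of size at most $\ell$, so $t\le\ell\le diam(\ICG_n(D))$.

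For the upper bound $diam(\ICG_n(D))\le 2t+1$, fix a minimum generating subset $D'=\{d_1,\ldots,d_t\}\subseteq D$ with $\gcd(d_1,\ldots,d_t)=1$. For an arbitrary $v\in Z_n$, Bezout's identity produces coefficients $a_i\in\{0,1,\ldots,n/d_i-1\}$ satisfying $v\equiv\sum_{i=1}^t a_id_i\pmod n$. Since edges of $\ICG_n(D)$ labelled by $d_i$ correspond exactly to elements of $d_i\cdot Z^*_{n/d_i}$, writing $a_id_i$ as a sum of such generators reduces to representing $a_i$ modulo $n/d_i$ as a sum of units of $Z_{n/d_i}$. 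The plan is to establish that two units suffice per index, except in a single parity-obstructed case (when $n/d_i$ is even and $a_i$ is odd, three units are required), and then to use the non-uniqueness of the Bezout decomposition to confine any such obstruction to one index, yielding at most $2(t-1)+3=2t+1$ steps.

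The main obstacle is the sumset analysis in the upper bound: one needs a Cauchy--Davenport-type statement certifying that $Z^*_m+Z^*_m$ covers every residue of compatible parity in $Z_m$, paired with a coefficient-redistribution step leveraging the relation $\sum_i c_id_i=1$ to ensure the three-unit exception arises for at most one index at a time. The lower bound, by contrast, is essentially a direct divisibility observation.
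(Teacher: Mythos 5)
You should first be aware that the paper contains no proof of this statement: it is Theorem~1, imported verbatim from \cite{severini} and stamped with an immediate \QED, so there is no in-paper argument to compare yours against. Judged on its own merits, your lower-bound argument is complete and correct: the divisors $d_{h(1)},\dots,d_{h(\ell)}$ read off a shortest $0$--$1$ path each divide the corresponding step $s_j$ and divide $n$, so their gcd divides $1$, and they therefore form a generating subset of $D$ of size at most $\ell\leq diam(\ICG_n(D))$. The upper bound, as submitted, is a plan rather than a proof, but the plan is sound and both ingredients you defer are true and elementary. The sumset fact needs no Cauchy--Davenport: by the Chinese Remainder Theorem a residue $r$ is a sum of two units of $\Z_m$ if and only if $m$ is odd, or $m$ is even and $r$ is even (modulo an odd prime power at most two choices of the first unit are excluded; modulo a power of $2$ the units are exactly the odd residues), and every odd residue modulo an even $m$ equals $1$ plus an even residue, hence is a sum of three units. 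For the confinement step, the cleanest mechanism is a swap rather than Bezout coefficients: pick an index $i_0$ with $d_{i_0}$ odd, which exists since $\gcd(d_1,\dots,d_t)=1$; for every other index $j$ with $n/d_j$ even and $a_j$ odd, replace $(a_{i_0},a_j)$ by $(a_{i_0}+d_j,\ a_j-d_{i_0})$, which preserves $\sum_i a_id_i$ and makes $a_j$ even. After processing all such $j$, at most the single index $i_0$ can require three units, giving at most $2(t-1)+3=2t+1$ edges. With those two points written out, your argument becomes a complete, self-contained proof of the cited bound, and it is consistent in spirit with the paper's later machinery (compare Lemma~\ref{lem:2 and 3 summands}, which is exactly your two-versus-three-units dichotomy).
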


Direct consequences of the previous assertion are the following bounds
\begin{eqnarray}
\label{enq:main}
2 \leq diam(\ICG_n(D)) \leq 2|D| + 1.
\end{eqnarray}

\smallskip

One of the main results in this paper shows the maximal diameter of integral circulant
graphs of a given order $n$ to be equal to $r(n)+1$, if $n\not\in 4\N+2$, and $r(n)$, if $n\in 4\N+2$, where
$$
r (n) = k + |\{ i \ | \alpha_i> 1,\ 1\leq i\leq k \}|
$$
for $n=p_1^{\alpha_1}\cdots p_k^{\alpha_k}$. 
From these results we immediately conclude that the upper bound from (\ref{enq:main}) is never attainable for $|D|>k$
(as $|D|$ grows, the difference between the upper bound from (\ref{enq:main}) and the actual diameter of the graph grows as well).
Furthermore, since $diam(\ICG_n(D))\leq r(n)+1\leq 2k+1$ and the fact that the average order of $k$ is $\ln \ln n$ (\cite[p.~355]{HardyWright}) we conclude that $diam(\ICG_n(D))=O(\ln \ln n)$.

In Section 2 we prove that the bounds $r(n)$ and $r(n)+1$ are attainable for integral circulant graphs
$\ICG_n(D)$ such that $|D|\leq k$. This fact helps us to prove that the maximal diameter of all integral circulant graphs $\ICG_n(D)$
of a given order $n$ and the cardinality of the divisor set $D$ equal to $k$, is  $r(n)$ (Lemma 3.1).
This result directly implies that the diameters of graphs in this class of graphs are less or equal to $2k$, whence we conclude that the bound (\ref{enq:main}) is not attainable either.
Moreover, in Theorem 3.2 we find all $\ICG_n(D)$, $|D|=k$ whose diameter is equal to $r(n)$.
In Theorems 3.4 and 3.6 we consider integral circulant graphs $\ICG_n(D)$ such that there exists a prime $p_i$ not dividing any $d\in D$
and calculate that maximal diameter of this class of graphs is equal to $r(n)$.
After characterizing all extremal graphs, we also determine all $\ICG_n(D)$ such that there exists a prime $p_i$ not dividing any $d\in D$ whose diameter is equal to $2|D|+1$.
This class of graphs generalizes the example given in Theorem 5 from \cite{severini}.
Finally, using Theorem 3.8 we determine the maximal diameter of integral circulant graphs $\ICG_n(D)$ of a given order $n$ and cardinality $t\leq k$ of the divisor set, and characterize all extremal graphs.
We actually show that the maximal diameter can have
the values $2t$, $2t+1$, $r(n)$ and $r(n)+1$ depending on the values of $t$ and $n$, as given by the equation (\ref{eqn:main}).
The paper is concluded with a partial result about the maximal diameter of integral circulant graphs having perfect state transfer.
We also give some comments considering lower bounds of diameters of integral circulant graphs.

\section{Preliminaries}

In this section we introduce some basic notations and definitions. 

\smallskip

 A {\it circulant graph} $G(n;S)$ is a
graph on vertices $\Z_n=\{0,1,\ldots,n-1\}$ such that vertices $i$
and $j$ are adjacent if and only if $i-j \equiv s \pmod n$ for some
$s \in S$. The set $S$ is called the {\it symbol set} of graph $G(n;S)$.
As we will consider undirected graphs without loops, we assume that
$S=n-S=\{n-s\ |\ s\in S\}$ and $0\not\in S$. Note that the degree of
the graph $G(n;S)$ is $|S|$.

\smallskip

A graph is {\it integral} if all its eigenvalues are integers. A
circulant graph $G(n;S)$ is integral if and only if
$$
S=\bigcup_{d \in D} G_n(d),
$$
for some set of divisors $D \subseteq D_n$ \cite {wasin}. Here
$G_n(d)=\{ k \ : \ \gcd(k,n)=d, \ 1\leq k \leq n-1 \}$, and $D_n$ is
the set of all divisors of $n$, different from $n$.

Therefore an {\it integral circulant graph} (in further text $\ICG$)
$G(n;S)$ is
defined by its order $n$ and the set of divisors $D$. 
An integral circulant graph with $n$ vertices, defined by the set of
divisors $D \subseteq D_n$ will be denoted by $\ICG_n(D)$.
The term `integral circulant graph' was first introduced
in the work of So, where the characterization of the class of circulant graphs with integral spectra was given. The class of
integral circulant graphs is also known as `gcd-graphs' and arises as a generalization of unitary Cayley graphs \cite{basic08,klotz07}.

From the
above characterization of integral circulant graphs we have that the
degree of an integral circulant graph is $\deg \ICG_n(D)=\sum_{d \in
D}\varphi(n/d). $ Here $\varphi(n)$ denotes the Euler-phi function
\cite{HardyWright}. If $D=\{d_1,\ldots,d_t\}$, it can be seen
that $\ICG_n(D)$ is connected if and only if
$\gcd(d_1,\ldots,d_t)=1$, given that $G(n;s)$ is connected if and
only if $\gcd(n, S)=1$.



Recall that the greatest distance between any pair of vertices in a graph is graph diameter. The distance $d(u,v)$ between two vertices $u$ and $v$ of a graph is the minimum length of the paths connecting them.

Throughout this paper we let that the order of $\ICG_n(D)$ has the
following prime factorization $n=p_1^{\alpha_1}\cdots
p_k^{\alpha_k}$. Also, for a given prime number $p$ and an integer $n$,
denote by $S_p(n)$ the maximal number $\alpha$ such that $p^{\alpha}
\mid n$. If $S_p(n)=1$ we write $p\|n$.

\section{Main results}

Let $G = \ICG_n(D)$ be a connected graph with maximal diameter in the class of all integral circulant graphs of order $n$, and $D = \{d_1, d_2, \ldots, d_t\}$.

Let $D'$ be an arbitrary subset of $D$, such that $\gcd(\{d\ |\ d\in D'\})=1$. The graph $\ICG_n(D')$ is
connected and clearly a subgraph of $\ICG_n(D)$, so it follows that
$$
diam(\ICG_n(D))\leq diam(\ICG_n(D')).
$$

Since $\ICG_n(D)$ has maximal diameter, we have $diam(\ICG_n(D'))=$ \\$diam(\ICG_n(D))$. Therefore, we will find maximal diameter among all $\ICG_n(D)$
such that for every subset of
divisors $D'\subset D$ it holds that $\gcd(\{d\ |\ d\in D'\}) > 1$ (the graphs $\ICG_n(D')$ are unconnected). Furthermore, from the last assumption it follows that $gcd(d_1,\ldots , d_{s-1},
d_{s+1}, \ldots, d_t)> 1$ for every $1\leq s \leq t$ and  from the connectedness of $\ICG_n(D)$ we have $\gcd (d_1, d_2, \ldots, d_t) = 1$.
Thus, we conclude that  for each $s$ there exists a prime divisor $p_{i_s}$ of $n$ such that
$p_{i_s}\nmid d_s$ and $p_{i_s} \mid d_j$ for all $1\leq j \neq s\leq t$.

Therefore, we may define a bijective mapping
$$
 f : \{d_1, \ldots , d_t\} \rightarrow \{p_{i_1}, \ldots , p_{i_t}\},
$$ since $d_{s_1} \neq d_{s_2}$ implies $p_{i_{s_1}} \neq p_{i_{s_2}}$. Finally, we
conclude that for every divisor $d_s$, $1\leq s\leq t$, it holds that
\begin{eqnarray}
\label{property divisors}
p_{i_s} &\nmid& d_s \\
\label{property divisors1}
p_{i_1},\ldots,p_{i_{s-1}},p_{i_{s+1}}, \ldots, p_{i_t} &\mid& d_s.
\end{eqnarray}

So, in the rest of the paper we will assume that the divisors of the
integral circulant graph $\ICG_n(D)$ have the property described above,
unless it is stated otherwise.

\bigskip

In the following example we show how to narrow the search for the maximal diameter among all integral circulant graphs with order $n=12$ and divisor set $D'$
that has the properties (\ref{property divisors}) and (\ref{property divisors1}). It is easy to see that we have only two such examples $D'=\{2,3\}$ and $D'=\{3,4\}$ and the diameters of the graphs  $\ICG_{12}(\{2,3\})$ and  $\ICG_{12}(\{3,4\})$ are $2$ and $3$, respectively. However, the diameter of the graph $\ICG_{12}(\{3,4,6\})$,
as an example of a graph not having the properties (\ref{property divisors}) and (\ref{property divisors1}), does not exceed the diameters of the previous two graphs (Fig. 1).

\begin{figure}[h]
\label{Pic:ICGs}
\begin{center}
\includegraphics[width=5truecm]{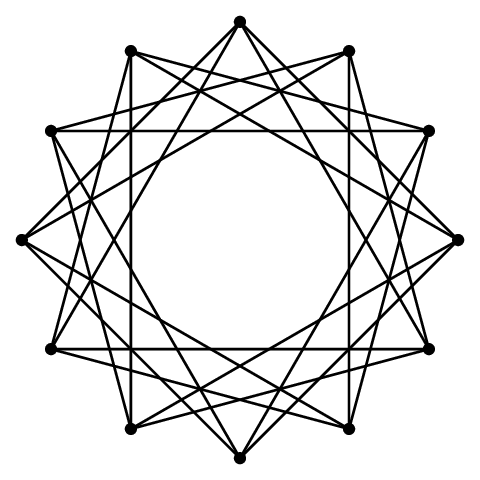} \hskip1truecm
\includegraphics[width=5truecm]{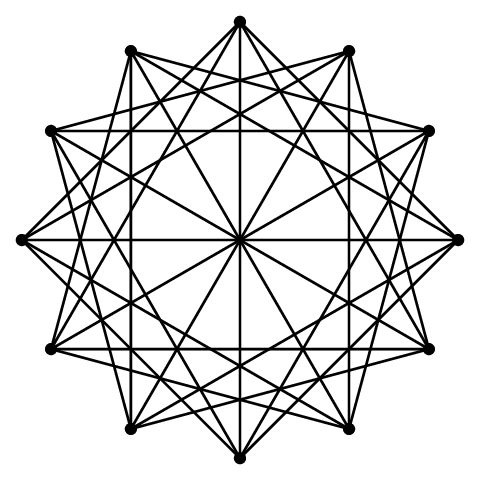}
\end{center}
\caption{Integral circulant graphs $\ICG_{12}(\{3,4\})$ on the left and $\ICG_{12}(\{3,4,6\})$ on the right whose diameters are equal to $3$ and $2$, respectively}
\end{figure}

\bigskip

Recall that two vertices $a$ and $b$ of $\ICG_n (d_1, d_2, \ldots, d_t)$ are
adjacent if and only if  $\gcd (a -  b,n) = d_i$ for some $1 \leq i \leq t$.
Given that the graph $\ICG_n(D)$ is vertex-transitive, we are going to focus on the vertex $0$ and
construct shortest paths from the vertex $0$ to any other vertex $0 \leq l=|a-b| \leq n - 1$.
\medskip

If we manage to find a solution $x = (x_1, x_2, \ldots, x_q)$ to
the following system of equations
$$
x_1 + x_2 + \cdots + x_q \equiv l \pmod n
$$
and
$$
\gcd(x_i, n) = d_{h(i)}
$$
for $1 \leq i \leq q$ and some function $h:\{1,\ldots
q\}\mapsto\{1,\ldots,t\}$, then we can construct the path from
$0$ to $l$ of  length $q$ passing through the vertices
$$
0, x_1, x_1 + x_2, \ldots, x_1 + x_2 + \cdots + x_q.
$$

We are going to look for  solutions of the above system of congruence equations and
greatest common divisor equalities, in the following representation
\begin{eqnarray}
\label{congruence n} d_1 y_1 + d_2 y_2 + \cdots + d_t y_t  \equiv l
\pmod n
\end{eqnarray}
with the constraints
\begin{eqnarray}
\label{congruence gcd} \gcd(d_j y_j, n) = d_j \qquad \mbox{for}
\qquad 1 \leq j \leq t,
\end{eqnarray}
where each summand $d_j y_j$ corresponds to one or more $x_i$'s. Notice that some of the $y_j$, $1\leq j\leq t$, can be equal to zero and in that case we do not  consider the constraint (\ref{congruence gcd}).

\bigskip

In the following lemma we prove that the upper bound of the diameter of $\ICG_n (d_1,d_2,\ldots, d_t)$, for $t=k$, is equal to $k + |\{ i \ | \alpha_i> 1,\ 1\leq i\leq k \}|$ and we denote this value by $r(n)$.

\begin{lemma}
\label{lem:auxiliary}
The diameter of the integral circulant graph $ \ICG_n (d_1,
d_2,\ldots, d_k)$, where the set of divisors $D=\{d_1, d_2,\ldots,
d_k\}$ satisfies the properties (\ref{property divisors}) and (\ref{property divisors1}), is less or
equal to $r (n)$.
\end{lemma}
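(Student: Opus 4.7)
My plan is, given any target vertex $l \in \Z_n$, to build a walk from $0$ to $l$ of length at most $r(n)$ by first prescribing how many edges $c_s \in \{0,1,2\}$ of each type $d_s$ will be used, and then constructing these edges via the Chinese Remainder Theorem.

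First I would use (\ref{property divisors})--(\ref{property divisors1}) to relabel so that $p_s \nmid d_s$ while $p_j \mid d_s$ for every $j \neq s$; thus $d_s = \prod_{j \neq s} p_j^{\beta_{s,j}}$ for some $1 \leq \beta_{s,j} \leq \alpha_j$. Decompose $l$ via CRT as $(l_1, \ldots, l_k)$ with $l_s \in \Z/p_s^{\alpha_s}\Z$, and assign $c_s = 1$ when $p_s \nmid l_s$, $c_s = 0$ when $p_s \mid l_s$ and $\alpha_s = 1$, and $c_s = 2$ when $p_s \mid l_s$ and $\alpha_s > 1$. Since $c_s \leq 1 + [\alpha_s > 1]$ in every case, one obtains $\sum_{s=1}^{k} c_s \leq k + |\{s : \alpha_s > 1\}| = r(n)$, which is the desired bound on the path length.

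The crucial observation is that an edge of type $d_s$ is specified, via CRT, by $k$ independent choices of components: a unit at coordinate $s$, and an element of exact $p_t$-valuation $\beta_{s,t}$ at each coordinate $t \neq s$. Consequently, the equation ``sum of chosen edges $\equiv l \pmod n$'' decouples into $k$ independent congruences, one modulo each $p_s^{\alpha_s}$, which I would solve coordinate by coordinate. At coordinate $s$ this reads $B_s + A_s \equiv l_s \pmod{p_s^{\alpha_s}}$, where $B_s$ is a sum of $c_s$ units of $\Z/p_s^{\alpha_s}\Z$ (from the type-$d_s$ edges), and $A_s$ is automatically a multiple of $p_s$ (from the other edges, each of positive $p_s$-valuation). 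The three cases above correspond exactly to the structure of $T_s := l_s - A_s$: if $T_s$ is a unit, a single unit $B_s = T_s$ suffices; if $T_s \equiv 0 \pmod{p_s}$ and $\alpha_s = 1$, then $T_s = 0$ and $B_s = 0$ works trivially; and if $T_s \equiv 0 \pmod{p_s}$ and $\alpha_s > 1$, I need to realize $T_s$ as a sum of two units.

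The main obstacle is precisely this last step: writing an arbitrary multiple of $p_s$ in $\Z/p_s^{\alpha_s}\Z$ as a sum of two units, including when $p_s = 2$ (where the unit group is small). I would handle all primes uniformly: since $T_s \equiv 0 \pmod{p_s}$ we have $T_s - 1 \not\equiv 0 \pmod{p_s}$, so the decomposition $T_s = 1 + (T_s - 1)$ always expresses $T_s$ as a sum of two units of $\Z/p_s^{\alpha_s}\Z$. Combined with the vertex transitivity of $\ICG_n(D)$, this produces a walk --- and hence a path --- of length $\sum_s c_s \leq r(n)$ from $0$ to $l$, proving $diam(\ICG_n(D)) \leq r(n)$.
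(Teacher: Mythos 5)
Your proposal is correct and follows essentially the same route as the paper: both decouple the congruence $\sum_s d_s y_s \equiv l \pmod n$ into independent conditions modulo each $p_s^{\alpha_s}$, and both use one edge of type $d_s$ when $p_s \nmid l$, none when $p_s \mid l$ and $\alpha_s = 1$, and two when $p_s \mid l$ and $\alpha_s > 1$, realizing a multiple of $p_s$ as a sum of two units via the same split $1 + (T_s - 1)$. The only difference is presentational: you work directly with the CRT components of the edge values and fix the edge counts $c_s$ up front, whereas the paper works with the quotients $y_s$ and inverts $d_s$ modulo $p_s^{\alpha_s}$.
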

\begin{proof}
Since the order of the divisors in $D$ is arbitrary and $D$ satisfies (\ref{property divisors}) and (\ref{property divisors1}), we may
suppose with out loss of generality that $i_s=s$, for $1\leq s\leq k$.
We are going to use the representation (\ref{congruence n}) for $t=k$, which requires solving the following system
\begin{equation}
\label{modulo pj^alphaj}
d_1 y_1 + d_2 y_2 + \cdots + d_k y_k \equiv l \pmod {p_{j}^{\alpha_{j}}}\quad 1\leq j\leq k,\\
\end{equation}
with the constraints
\begin{eqnarray*}
 \gcd(d_j y_j, n) = d_j \qquad \mbox{for} \qquad 1 \leq j \leq k.
\end{eqnarray*}
The last equation is equivalent to
\begin{eqnarray}
\label{congruence gcd1} \gcd( y_j, \frac{n}{d_j}) = 1\qquad \Leftrightarrow \qquad
\ p_i\nmid y_j,\ \mbox{if}\ S_{p_i}(d_j)<\alpha_i, 1\leq i\leq k.
\end{eqnarray}

From (\ref{property divisors}) we have $S_{p_j}(d_j)=0<\alpha_j$ and it directly holds that $p_{j}\nmid y_j$. 
\bigskip

Furthermore, as $\gcd(d_j, p_j^{\alpha_j}) = 1$ it follows
$$
y_j \equiv (l-p_jt) \cdot d_j^{-1} \pmod {p_j^{\alpha_j}},
$$
where by $p_jt$ we denote $d_1y_1+\cdots+d_{j-1}y_{j-1}+d_{j+1}y_{j+1}+\cdots+d_ky_k$.
Note that the only constraint is that $y_j$ cannot be divisible by $p_j$ as we already discuss above.  
\medskip

If $p_j$ does not divide $l$, we can directly compute $y_j$ which is not divisible by $p_j$. Assume now that $p_j$ divides $l$.

\medskip

For $\alpha_j = 1$, as $p_j\mid l$ we have
$$
y_j \equiv (l-p_jt) \cdot d_j^{-1} \equiv 0 \pmod {p_j^{\alpha_j}}
$$
and we can decide not to include the summand $d_j y_j$ in the above summation (otherwise, we obtain a contradiction with the constraint $p_j\nmid y_j$). This basically means that we are going to put $y_j = 0$ (ignore that summand) and in this case we do not consider the constraint $\gcd(d_jy_j,n)=d_j$.

\medskip

For $\alpha_j > 1$, assume that
\begin{equation}
\label{eq:less r(n)}
y_j \equiv (l-p_jt) \cdot d_j^{-1} \equiv p_j^{\beta_j} \cdot s \pmod {p_j^{\alpha_j}},
\end{equation}
where $p_j\nmid s$. For $\beta_j = \alpha_j$, we can similarly drop the summand $d_j y_j$ from the summation.

\medskip

If $\beta_j<\alpha_j$, then $y_j=0$ obviously is not a solution of the congruence equation. The trick now is to split $y_j$ into two summands $y_j' + y_j''$ which are both coprime with $p_j$ and the sum is equal to $p_j^{\beta_j} \cdot s$ modulo $p_j^{\alpha_j}$. This can be easily done by taking
$y_j' = 1$ and $y_j'' = p_j^{\beta_j} \cdot s - 1$. Therefore, we split the summand $d_j y_j$ into two summands that satisfy all the conditions
$$
d_j \cdot 1 + d_j \cdot (p_j^{\beta_i} \cdot s - 1).
$$
This means that for the prime factors with $\alpha_j > 1$ we need two edges in order to construct
a path from $0$ to $l$ when $ p_j^{\beta_j}\| l-p_jt $, $\beta_j<\alpha_j$.

Finally, we conclude that if we want to construct a path from $0$ to $l$, for an arbitrary $l$, we need at most one edge that corresponds the modulo $p_j$, if $\alpha_j=1$, and
at most two edges that correspond the modulo $p_j$, if $\alpha_j>1$, for every $1\leq j\leq k$. This means that $diam(\ICG_n(d_1,\ldots,d_k))\leq r(n)$.

\end{proof}

In the following theorem we show that the maximal diameter of \\$\ICG_n (d_1,d_2,\ldots, d_t)$, for $t=k$, is equal to $r(n)$ and characterize all extremal graphs.

\begin{theorem}
\label{thm:r(n)}
The maximal diameter of the integral circulant graph \\$ \ICG_n (d_1,
d_2,\ldots, d_k)$, where the set of divisors $D=\{d_1, d_2,\ldots,
d_k\}$ satisfies the properties (\ref{property divisors}) and (\ref{property divisors1}), is
equal to $r (n)$.
 The equality holds in two following cases
\begin{itemize}
\item[i)] if $\alpha_j>1$ then $S_{p_j}(d_i)>1$, for $1\leq i\neq j\leq k$ 
\item[ii)] if $n\in 4\N+2$ and  $d_1\in 2\N+1$ then there exists exactly one
$2\leq j\leq k$ such that $p_j\| d_1$, $S_{p_j}(d_i)>1$, for $2\leq i\leq k$  and other prime factors $p_l$ ($1\leq l\neq j \leq k$) satisfy $S_{p_l}(d_i)>1$, if $\alpha_l>1$, for $1\leq i\neq l\leq k$. 
\end{itemize}
\end{theorem}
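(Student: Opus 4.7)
The upper bound $\mathrm{diam}(\ICG_n(D))\le r(n)$ comes from Lemma~\ref{lem:auxiliary}, so the plan reduces to (a) exhibiting a vertex $l$ with $d(0,l)=r(n)$ whenever (i) or (ii) holds, and (b) showing every vertex is strictly closer otherwise. Throughout I would analyse the system (\ref{congruence n})--(\ref{congruence gcd}) prime by prime via the Chinese Remainder Theorem: a step of type $d_m$ decomposes into a $k$-tuple whose $j$-th coordinate has $p_j$-valuation $\min(S_{p_j}(d_m),\alpha_j)$ and is otherwise a free unit multiple, so once the step counts $q_1,\ldots,q_k$ are fixed, achievability at each prime is decoupled from the others.

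For case (i), I would choose $l$ by CRT with $p_m\nmid l$ when $\alpha_m=1$ and $S_{p_m}(l)=1$ when $\alpha_m>1$. At each $j$ with $\alpha_j=1$, only type-$j$ steps have nontrivial residue mod $p_j$, forcing $q_j\ge 1$. At each $j$ with $\alpha_j>1$, hypothesis (i) makes $p_j^2\mid d_i$ for every $i\ne j$, so $q_j=0$ yields a multiple of $p_j^2$ and $q_j=1$ yields a unit mod $p_j$, both contradicting $p_j\| l$; hence $q_j\ge 2$. Summing the per-prime lower bounds gives $\sum_m q_m\ge r(n)$, matching Lemma~\ref{lem:auxiliary}.

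For case (ii), let $j$ be the distinguished prime (so $p_j\| d_1$ and $\alpha_j\ge 2$) and pick $l$ by CRT with $l\equiv 0\pmod{p_j^{\alpha_j}}$, $l$ odd, and the case-(i) valuations at every other prime. Primes outside $\{p_1,p_j\}$ then force a contribution of $r(n)-3$ by the same argument as in case (i). For $(p_1,p_j)$, oddness of $l$ forces $q_1$ odd, and the $j$-th coordinate must vanish mod $p_j^{\alpha_j}$, with type-$1$ contributing at $p_j$-valuation $1$ and type-$i$ (for $i\ne 1,j$) at valuation $\ge 2$. If $q_j\ge 2$ then $q_1+q_j\ge 3$ immediately; $q_j=1$ is impossible since a unit plus multiples of $p_j$ is a unit mod $p_j$; and $q_j=0$ forces $p_j\mid u_1+\cdots+u_{q_1}$, excluding $q_1=1$ and, combined with parity, yielding $q_1\ge 3$. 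In every subcase $q_1+q_j\ge 3$, so $d(0,l)\ge r(n)$.

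For the necessity direction, suppose neither (i) nor (ii) holds, so some pair $(i,j)$ satisfies $\alpha_j>1$, $i\ne j$, $p_j\| d_i$, while this pair is not the unique configuration allowed by (ii). The saving mechanism is that two type-$i$ steps contribute $p_j(u_1+u_2)$ at $p_j$, which together with the $p_j^{\ge 2}$ freedom from the remaining non-$j$ types can realise any prescribed residue mod $p_j^{\alpha_j}$ without any type-$j$ step. The net cost change is $2-\Delta q_i$, where $\Delta q_i$ is the extra type-$i$ steps required: $\Delta q_i=0$ if $\alpha_i>1$ (two type-$i$ edges are already forced), $\Delta q_i=1$ if $\alpha_i=1$ with $p_i$ odd, and $\Delta q_i=2$ only in the parity-bound case $i=1$, $p_1=2$ (where $q_1$ must jump from $1$ to $3$). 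Only the last subcase is cost-neutral, and a single $q_1=3$ simultaneously serves any number of such $j$'s, so any further ``light'' pair beyond the (ii) configuration produces a strict saving, yielding $d(0,l)<r(n)$ for every $l$ and hence $\mathrm{diam}<r(n)$. The main obstacle is verifying that these prime-local shortenings compose consistently across all CRT components, which follows from the independence of the unit choices at each prime.
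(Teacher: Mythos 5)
Your proposal is correct and follows essentially the same route as the paper: the upper bound is taken from Lemma~\ref{lem:auxiliary}, the extremal vertex is built by CRT with valuation $0$ or $1$ at each prime and the per-prime step counts $q_j\ge r(p_j^{\alpha_j})$ are summed, and the necessity direction is the paper's $u_j$-case analysis recast as a cost accounting of how many extra type-$i$ edges replace the two type-$j$ edges. The only differences are cosmetic (your $l\equiv 0\pmod{p_j^{\alpha_j}}$ versus the paper's $l_0\equiv p_j^2\pmod{p_j^{\alpha_j}}$ in case (ii), and a more compressed treatment of the $p_j=2$ parity subtleties in the necessity direction, which still yield a strict saving in every non-(ii) configuration).
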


\begin{proof}


Since the diameter of $ \ICG_n (d_1,
d_2,\ldots, d_k)$ is less or equal to $r(n)$, where the set of divisors $D=\{d_1, d_2,\ldots,
d_k\}$ satisfies the properties (\ref{property divisors}) and (\ref{property divisors1}),  according to Lemma \ref{lem:auxiliary}, in the first part of the proof we analyze when the graph has diameter less than $r(n)$.
From (\ref{eq:less r(n)}) we conclude that this is the case  when the condition $l-p_jt \equiv 0 \pmod {p_j^{\alpha_j}}$ is satisfied, for $\alpha_j>1$ and $p_j\mid l$.

Observe that the sum $p_jt$ can be
rewritten in the following form
\begin{eqnarray}
\label{auxiliary form}
p_j(t_1+\cdots+t_{u_j})+p_j^2(t_{u_j+1}+\cdots+t_{k-1}),
\end{eqnarray}
where $u_j=|\{d_i\ |\ i\in\{1,\ldots,j-1,j+1,\ldots,k\},\
p_j\| d_i, \ d_iy_i=p_jt_i\}|$ and $p_j\nmid t_1,\ldots,t_{u_j}$.
From (\ref{congruence gcd1}) we conclude that this form is indeed always possible since it holds that $p_j\nmid y_1,\ldots,y_{u_j}$
as $1=S_{p_j}(d_i)<\alpha_j$, $1\leq i\leq u_j$. Furthermore, since we
take into consideration the values $d_iy_i$ modulo $p_j^{\alpha_j}$,
we can assume, with out loss of generality,  that
$S_{p_j}(p_j^{2}t_i)<\alpha_j$ for $u_j+1\leq i\leq k$, where $d_iy_i=p_j^2t_i$.
In that case, according to (\ref{congruence gcd1}) it
must be that $p_j\nmid y_i$, for all $1\leq i\leq k$.

\smallskip

Assume first that $u_j\geq 2$. We see that
$l-p_jt \equiv 0 \pmod {p_j^{\alpha_j}}$  if and only if\\
\begin{eqnarray}
\label{eqn: mod l}
t_1+\cdots+t_{u_j}+p_j(t_{u_j+1}+\cdots+t_{k-1})\equiv \frac{l}{p_j} \pmod
{p_j^{\alpha_j-1}}.
\end{eqnarray}
 Furthermore, since $p_j\nmid t_1$ we see that $t_1$ must satisfy the following system
 $t_1\equiv \frac{l}{p_j}
-t_2-\cdots-t_{u_j}-p_j(t_{u_j+1}+\cdots+t_{k-1})\pmod
{p_j^{\alpha_j-1}}$ and $t_1\not\equiv p_js_1 \pmod
{p_j^{\alpha_j-1}}$, for some $0\leq s_1<p_j^{\alpha_j-2}$.
Again, since $p_j\nmid
t_2$ too, we obtain that $t_2\not\equiv
\{\frac{l}{p_j}-t_3-\cdots-t_{u_j}-p_j(t_{u_j+1}+\cdots+t_{k-1})-p_js_1,p_js_2\}\pmod
{p_j^{\alpha_j-1}}$, for some $0\leq s_2<p_j^{\alpha_j-2}$.
Therefore, if $p_j>2$ we can find the value $t_2$ modulo
$p_j^{\alpha_j-1}$ which is a solution of the system if we
take arbitrary values $t_3,\ldots,t_{k-1}$ modulo
$p_j^{\alpha_j-1}$ (the value $t_1$ modulo
$p_j^{\alpha_j-1}$ can be obviously computed thereafter using the values
$t_2,\ldots,t_{k-1}$). If $p_j=2$ and $\frac{l}{p_j}-t_3-\cdots-t_{u_j}\in 2\N+1$,
we can not find an odd $t_2$  which would be a solution of the system.
For such $l$, in the same way we can prove that the system
$d_1(y_1^{(1)}+y_1^{(2)})+d_2y_2+\cdots
+d_{j-1}y_{j-1}+d_{j+1}y_{j+1}+\cdots+d_ky_k\equiv l \pmod
{p_j^{\alpha_j}}$, $p_j\nmid y_1^{(1)},y_1^{(2)},y_2,\ldots,y_k$,
has a solution by reducing it to the form
$t^1_1+t^2_1+t_2+\cdots+t_{u_j}+p_j(t_{u_j+1}+\cdots+t_{k-1})\equiv \frac{l}{p_j}\pmod
{p_j^{\alpha_j-1}}$. So, in this case we need at most two edges for the
parts of the path from $0$ to $l$ corresponding to the moduli $p_1$ and $p_j$, since $l-p_jt\equiv 0\pmod {p_j^{\alpha_j}}$ and $y_j=0$.
Finally, we conclude that the diameter of this graph can not reach the value $r(n)$, since $r(p_1^{\alpha_1})+r(p_j^{\alpha_j})>2$.

\medskip

Now, let $u_j=1$. If $p_j>2$, similarly to the previous case by examining the system $d_1(y_1^{(1)}+y_1^{(2)})+d_2y_2+\cdots
+d_{j-1}y_{j-1}+d_{j+1}y_{j+1}+\cdots+d_ky_k\equiv l \pmod
{p_j^{\alpha_j}}$ (which corresponds to the equation
$t^1_1+t^2_1+p_j(t_{2}+\cdots+t_{k-1})\equiv \frac{l}{p_j}\pmod
{p_j^{\alpha_j-1}}$) and $p_j\nmid y_1^{(1)},y_1^{(2)},y_2,\ldots,y_k$, we see that
we can find $t^1_1,t^2_1,t_{2},\ldots,t_{k-1}$ such that $l-p_jt\equiv 0 \pmod {p_j^{\alpha_j}}$ and therefore we can set $y_j=0$.
Furthermore, as $p_1\mid d_2,\ldots,d_k$ it holds that $d_1(y_1^{(1)}+y_1^{(2)})\equiv l \pmod
{p_1}$ and if both $y_1^{(1)},y_1^{(2)}\neq 0$ then we conclude that for $p_1=2$ and $l\in 2\N+1$ the parity of left and right hand sides of the equation is violated
($p_1\nmid y_1^{(1)},y_1^{(2)}$). So, in this case we need one extra edge and
therefore at most three edges for the parts of the diameter path corresponding to moduli $p_1$ and $p_j$.
Finally, we conclude that maximal diameter of this graph can be attained if the value $r(p_1^{\alpha_1})+r(p_j^{\alpha_j})$ is equal to $3$ and this is the case if only if $\alpha_1=1$.
If $p_j=2$, we have already proved that for $\frac{l}{p_j}\in 2\N$ the system $t^1_1+t^2_1+p_j(t_{2}+\cdots+t_{k-1})\equiv \frac{l}{p_j}\pmod
{p_j^{\alpha_j-1}}$, $p_j\nmid t^1_1,t^2_1$ has a solution (case $u_j=2$) and therefore $l-p_jt\equiv 0\pmod {p_j^{\alpha_j}}$ ($y_j=0$).
Considering the modulus $p_1$, we can find $y_1^{(1)},y_1^{(2)}$ not divisible by $p_1$ and $y_1^{(1)}+y_1^{(2)}=(l-p_1t')d_1^{-1}$, where
$p_1t'=d_2y_2+\ldots +d_{j-1}y_{j-1}+d_{j+1}y_{j+1}+\cdots+d_ky_k$ as $p_1>2$.
In this case we need at most two edges for the parts of the path from $0$ to $l$ corresponding to the moduli $p_1$ and $p_j$ (the diameter of this graph can not reach the value $r(n)$).
If $\frac{l}{p_j}\in 2\N+1$, assume that $p_1\nmid l$, then   by examining the system $d_1y_1+d_2y_2+\cdots
+d_{j-1}y_{j-1}+d_{j+1}y_{j+1}+\cdots+d_ky_k\equiv l \pmod
{p_j^{\alpha_j}}$ (which corresponds to the equation
$t_1+p_j(t_{2}+\cdots+t_{k-1})\equiv \frac{l}{p_j}\pmod
{p_j^{\alpha_j-1}}$) and $p_j\nmid y_1,y_2,\ldots,y_k$, we see that $t_1\in 2\N+1$ satisfies the following system
 $t_1\equiv \frac{l}{p_j}-p_j(t_{u_j+1}+\cdots+t_{k-1})\pmod
{p_j^{\alpha_j-1}}$ and $t_1\not\equiv p_js \pmod
{p_j^{\alpha_j-1}}$, for some $0\leq s<p_j^{\alpha_j-2}$.
Moreover, $y_1\equiv (l-p_1t')d_1^{-1} \pmod{p_1^{\alpha_1}}$ is a solution of the congruence equation modulo $p_1$ and
in this case we need one edge for the parts of the path from $0$ to $l$ corresponding to the moduli $p_1$ and $p_j$ (the diameter of this graph can not reach the value $r(n)$).
Finally, if $\frac{l}{p_j}\in 2\N+1$ and $p_1\mid l$, the following system can be similarly solved $d_2 y_2 + \cdots +d_{j} (y_{j}^1+y_{j}^2)+\cdots + d_k y_k \equiv l \pmod {p_{j}^{\alpha_j}}$, $p_j\nmid y_{j}^1,y_{j}^2$.

We have proved that if $u_i=1$, $n\in 2\N$ and
$l,d_1\in2\N+1$, we can find a solution of the congruence system
$d_1(y_1^{(1)}+y_1^{(2)}+y_1^{(3)})+d_2y_2+\cdots
+d_{j-1}y_{j-1}+d_{j+1}y_{j+1}+\cdots+d_ky_k\equiv l \pmod
{p_i^{\alpha_j}}$, $p_i\nmid
y_1^{(1)},y_1^{(2)},y_1^{(3)},y_2,\ldots,y_k$.
Notice, that if $u_j=1$ and $p_j\| d_1$  we can find a solution of the congruence system
$d_1(y_1^{(1)}+y_1^{(2)}+y_1^{(3)})+d_2y_2+\cdots
+d_{j-1}y_{j-1}+d_{j+1}y_{j+1}+\cdots+d_ky_k\equiv l \pmod
{p_j^{\alpha_j}}$, $p_j\nmid
y_1^{(1)},y_1^{(2)},y_1^{(3)},y_2,\ldots,y_k$. This implies that we
can set $y_j=0$ and conclude that we need three edges for the
parts of the path corresponding to the moduli $p_1$, $p_i$ and $p_j$ and this path can not attain the value $r(n)$. The same
conclusion holds for more than two prime divisors.

\smallskip

From the  discussion above we conclude that
the diameter path can attain the value $r(n)$,
if $S_{p_j}(d_i)>1$ for $\alpha_j> 1$ and
$i\in\{1,\ldots,j-1,j+1,\ldots,k\}$ (the case $u_j=0$).
In yet another case
the value $r(n)$ of the diameter can be
attained if $S_{p_j}(d_1)=1<S_{p_j}(d_i)$, $2\leq i\leq k$, and $p_j,d_1\in 2\N+1$, for $n\in 4\N+2$ (the case $u_j=1$).
Now, we prove that there exists a vertex $l_0$ where the distance
from $0$ to $l_0$ is equal to $r(n)$ in both of the mentioned cases.

In the first case, we derive  $l_0$ from the system of
congruence equations composed by choosing exactly one of the following equation, for each $1\leq j\leq k$ (which exists due to the Chinese remainder theorem)
\begin{eqnarray*}
&&l_0 \equiv -1 \pmod {p_{j}} \qquad \mbox{ if } \alpha_{j} = 1\\
&&l_0 \equiv p_{j} \pmod {p_{j}^{\alpha_{j}}} \qquad \mbox{ if }
\alpha_{j}> 1.
\end{eqnarray*}

For all $1\leq j\leq k$ such that $\alpha_{j} = 1$, we need at least
one summand $d_jy_j$ in the representation (\ref{modulo pj^alphaj}) of $l_0$
 since $l_0\not\equiv
0\pmod{p_{j}^{\alpha_{j}}}$ and all other $d_i$ are divisible by
$p_{j}^{\alpha_{j}}$ for $i \neq j$.  On the other hand, for all
$1\leq j\leq k$ such that $\alpha_{j} > 1$ we cannot have exactly
one such summand as otherwise we would have $d_jy_j\equiv
l_0-p_j^2t'\equiv p_{j}s\pmod{p_{j}^{\alpha_{j}}}$ and $p_j\nmid s$, which would be a
contradiction as $p_j\nmid d_jy_j$, where $p_j^2t'=d_1y_1+\cdots+d_{j-1}y_{j-1}+d_{j+1}y_{j+1}+\cdots+d_ky_k$.
Therefore, we need at least two summands for $\alpha_{j}> 1$.

\smallskip

In the second case, we derive $l_0$ from the following system of
congruence equations
\begin{eqnarray*}
&&l_0 \equiv -1 \pmod {p_{i}} \qquad \mbox{ if } \alpha_{i} = 1\\
&&l_0 \equiv p_{j}^{2} \pmod {p_{j}^{\alpha_{j}}} \qquad
\mbox{ if } S_{p_j}(d_1)=1<S_{p_j}(d_l),\ 2\leq l\leq k\\
&&l_0 \equiv p_{i} \pmod {p_{i}^{\alpha_{i}}} \qquad \mbox{ if }
\alpha_{i}> 1,\ i\neq j.\\
\end{eqnarray*}

It remains to prove that we need two edges by considering the modulus
$p_{j}^{\alpha_{j}}$, for $p_j, d_1\in 2\N+1$ and $S_{p_j}(d_1)=1<S_{p_j}(d_l),\ 2\leq l\leq k$.
 Indeed, we cannot have exactly one such
summand as otherwise we would have $d_jy_j\equiv
l_0-p_jt\equiv p_{j}s\pmod{p_{j}^{\alpha_{j}}}$ and $p_j\nmid s$,
which is a contradiction due to $p_j\nmid d_jy_j$.

This proves that the diameter of $\ICG_n(D)$ in these cases is
greater or equal to $r(n)$ and therefore $diam(\ICG_n(D))=r(n)$, which completes the proof of the
theorem.
\end{proof}

We provide several examples to illustrate how Theorem \ref{thm:r(n)} is applied to determine whether the diameter of $\ICG_n(D)$ such that $|D|=k$ attains $r(n)$.
\begin{itemize}
    \item [\bf $i)$] If $n=2^2\cdot 3^3\cdot 5$ and $D=\{3^2\cdot 5,2^2\cdot 5,2^2\cdot 3^3\}$ then $diam(\ICG_n(D))=5$, according to the first part of Theorem \ref{thm:r(n)} and therefore the diameter of the graph attains $r(n)$.

        We will also compute the diameter of $\ICG_n(d_1,d_2,d_3)$ for $n=2^2\cdot 3^3\cdot 5$,
        $d_1=3^2\cdot 5$, $d_2=2^2\cdot 5$ and $d_3=2^2\cdot 3^3$ to illustrate the methods from the proof of Theorem \ref{thm:r(n)}. Indeed, for any $0\leq l\leq n-1$ we will try to solve the equation in the following form $d_1y_1+d_2y_2+d_3y_3\equiv l\pmod n$, with the constraints $\gcd(d_jy_j,n)=d_j$, for $1\leq j\leq 3$. This is equivalent with the following system
        \begin{eqnarray*}
        y_1&\equiv& (l-d_2y_2-d_3y_3)\cdot d_1^{-1}\pmod {2^2}\\
        y_2&\equiv& (l-d_1y_1-d_3y_3)\cdot d_2^{-1}\pmod {3^3}\\
        y_3&\equiv& (l-d_1y_1-d_2y_2)\cdot d_3^{-1}\pmod 5,\\
        \end{eqnarray*}
        where $2\nmid y_1$, $3\nmid y_2$ and $5\nmid y_3$. If $2\nmid l$ then we can directly compute $y_1$. Similarly if $3\nmid l$ or $5\nmid l$, we can directly compute
        $y_2$ or $y_3$, respectively. In particular, if $2\nmid l$, $3\nmid l$ and $5\nmid l$ we conclude that the distance between $0$ and $l$ is equal to $3$.

        If $2^2\mid l-d_2y_2-d_3y_3$ or  $3^3\mid l-d_1y_1-d_3y_3$ or $5\mid l-d_1y_1-d_2y_2$ then we obtain that a solution of the corresponding equation is $y_1=0$ or $y_2=0$ or $y_3=0$, respectively.
        On the other hand, if $3^2\|l-d_1y_1-d_3y_3$ or $3\|l-d_1y_1-d_3y_3$ we can not find $y_2$ satisfying the above congruence equation such that $3\nmid y_2$.
        However, for such $l$ there exist $y_2'$ and $y_2''$ such that  $y_2'+y_2''\equiv (l-d_1y_1-d_3y_3)\cdot d_2^{-1}\pmod {3^3}$ and $3\nmid y_2',y_2''$. This means that we need two edges in order to construct
        a path from $0$ to $l$ when $3^2\|l-d_1y_1-d_3y_3$ or $3\|l-d_1y_1-d_3y_3$ corresponding to the modulus $3^3$. Similarly, if we take
        $2\|l-d_2y_2-d_3y_3$ in order to construct a path from $0$ to $l$ we need two edges corresponding to the modulus $2^2$. Finally, the distance between any two vertices is less or equal to $5$.
        According to the above discussion we can choose $l$ such that
        \begin{eqnarray*}
        l&\equiv& -1\pmod 5\\
        l&\equiv& 3\pmod {3^3}\\
        l&\equiv& 2\pmod {2^2},\\
        \end{eqnarray*}
        and calculate that $l=354$ using the Chinese Remainder Theorem. Therefore, we
         conclude that the distance between $0$ and $l$ is equal to $5$.

\item [\bf $ii)$] If $n=2\cdot 3^3\cdot 5^3$ and $D=\{3\cdot 5^2,2\cdot 5^3,2\cdot 3^2\}$ then  $diam(\ICG_n(D))=5$, due to the part $ii)$ of Theorem \ref{thm:r(n)} as there exists exactly one $p_j=3\mid d_1$ such that $S_{p_j}(d_1)=1<S_{p_j}(d_3)$ and $d_1\in 2\N+1$, for $n\in4\N+2$ and the diameter of the graph is equal to $r(n)=5$.

\item [\bf $iii)$] If $n=2^2\cdot 3^2\cdot 5\cdot 7$ and $D=\{3\cdot 5\cdot 7,2^2\cdot 5\cdot 7,2^2\cdot 3^2\cdot 7,2^2\cdot 3^2\cdot 5\}$
then $diam(\ICG_n(D))=5$ (less than $r(n)$), as $n\in 4\N$ even though the other conditions from the part $ii)$ of Theorem \ref{thm:r(n)} are satisfied.
On the other hand, since $S_{p_2}(n)>1$, for $p_2=3$, and $S_{p_2}(d_1)=1$, we conclude that first part of the theorem is not satisfied.

\item [\bf $iv)$] If $n=2^2\cdot 3\cdot 5\cdot 7$ and $D=\{3\cdot 5\cdot 7,2\cdot 5\cdot 7,2^2\cdot 3\cdot 7,2^2\cdot 3\cdot 5\}$ then $diam(\ICG_n(D))=4$ (less than $r(n)$), as all conditions from the part $i)$ of Theorem \ref{thm:r(n)} are satisfied except $S_{p_1}(d_2)>1$ for $p_1=2$ and $d_2=2\cdot 5\cdot 7$.

\item [\bf $v)$] If $n=2\cdot 3^2\cdot 5^2\cdot 7^2$ and $D=\{3\cdot 5\cdot 7,2\cdot 5^2\cdot 7^2,2\cdot 3^2\cdot 7^2,2\cdot 3^2\cdot 5^2\}$ then  $diam(\ICG_n(D))=5$ (less than $r(n)$), as the condition from the part $ii)$ of Theorem \ref{thm:r(n)} is satisfied for more than one divisor (in this for three divisors $p_j\in \{3,5,7\}$).
\end{itemize}

\bigskip

We also directly conclude that in the class of all integral circulant graphs with a given order $n$ and $k-$element divisor set $D$
we can not find an example for which $diam (\ICG_n(d_1,\ldots,d_k))=2k+1$, since $diam (\ICG_n(d_1,\ldots,d_k))\leq r(n)\leq 2k$.
So, in this case the upper bound from (\ref{enq:main}) is not attainable.

\bigskip

\begin{lemma}
\label{lem:2 and 3 summands}
Let $d$ be an arbitrary divisor of $n$. For any $l$ divisible by $d$ there exist $y_1$ and $y_2$
such that
\begin{eqnarray*}
&&d(y_1+y_2)\equiv l \pmod n, \mbox{ if } \frac n d \in 2\N+1 \\
&&d(y_1+y_2+1)\equiv l \pmod n, \mbox{ if } \frac n d \in 2\N
\end{eqnarray*}
and $\gcd(dy_i,n)=d$, $1\leq i\leq 2$.
\end{lemma}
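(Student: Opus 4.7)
The plan is to reformulate both identities in terms of the quotient $m := n/d$ and then to solve the resulting problem of finding two integers coprime to $m$ with prescribed sum, via the Chinese Remainder Theorem applied prime by prime.

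First I would reduce the conditions. Since $\gcd(dy_i,n)=d$ is equivalent to $\gcd(y_i,m)=1$, and since $d\mid l$ permits writing $l=dl'$, the first identity becomes $y_1+y_2\equiv l'\pmod{m}$ (Case 1, with $m$ odd) and the second becomes $y_1+y_2\equiv l'-1\pmod{m}$ (Case 2, with $m$ even). In both cases the task collapses to a uniform problem: given $m$ and a target $s\pmod{m}$, find $y_1,y_2$ coprime to $m$ with $y_1+y_2\equiv s\pmod{m}$.

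Next I would resolve this problem by factoring $m=\prod q^{\beta}$ and choosing a residue of $y_1\pmod{q^{\beta}}$ independently at each prime power $q^{\beta}\|m$ so that both $y_1$ and $s-y_1$ are nonzero modulo $q$; setting $y_2:=s-y_1$ then forces $y_2$ to be coprime to $q$ as well. For an odd prime $q\mid m$, the set of forbidden residues for $y_1\pmod{q}$ is $\{0,\ s\bmod q\}$, at most two residues inside a set of size $q\geq 3$, so a valid choice exists and any lift to $q^{\beta}$ is admissible. The Chinese Remainder Theorem then glues the chosen prime-power residues of $y_1$ to an integer modulo $m$, after which $y_2$ is determined and inherits the required coprimality by construction.

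The delicate case is $q=2$, which appears only in Case 2 where $m$ is even. There the unique residue coprime to $2$ is $1$, so both $y_i$ are forced to be odd, whence $s$ must be even; substituting $s=l'-1$ shows this is equivalent to $l'$ being odd, and the role of the shift by $+1$ in the statement is precisely to effect this parity alignment. I expect this parity issue to be the main obstacle: the proof must either exploit an implicit assumption on $l$ supplied by the ambient context in which the lemma is applied, or else augment Case 2 with the observation that if $l'$ is even then the Case 1 representation $y_1+y_2\equiv l'\pmod{m}$ is already compatible with $y_1,y_2$ both odd, yielding a two-summand form in place of the three-summand one. The combinatorial core of the argument, however, is the elementary CRT count of forbidden residues described above.
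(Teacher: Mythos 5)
Your proposal is correct and follows essentially the same route as the paper: reduce to $y_1+y_2\equiv l/d \pmod{n/d}$ with $\gcd(y_i,n/d)=1$, solve prime by prime by counting the (at most two) forbidden residue classes at each prime, and glue by the Chinese Remainder Theorem — your count modulo $q$ is a slightly cleaner version of the paper's count of $2q^{\beta-1}$ forbidden residues modulo $q^{\beta}$. Your worry about the prime $2$ is well founded: as literally stated the three-summand case forces $l/d$ to be odd (both $y_i$ must be odd, so $y_1+y_2+1$ is odd), and the paper's own proof only establishes the three-summand form for $l/d\in 2\N+1$, falling back on the two-summand form when $l/d$ is even — exactly the augmented Case 2 you describe. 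So your reading is the intended one, and no further idea is missing.
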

\begin{proof}
 We find a representation of $l$
in the following form
$dy_1+dy_2\equiv l \pmod{n}$ such that
$\gcd(dy_1,n)=d$, $\gcd(dy_2,n)=d$ and prove that the representation of $l$ exists if $\frac{n}{d}\in 2\N+1$.
We actually need to find $y_1$ and $y_2$
such that $y_1+y_2\equiv \frac{l}{d}\pmod
{\frac{n}{d}}$, $\gcd(y_1,\frac{n}{d})=1$ and
$\gcd(y_2,\frac{n}{d})=1$.
Now, let $p_i$ be an arbitrary divisor of $\frac n {d}$ such that
$S_{p_i}(\frac{n}{d})=\alpha_i$.
By solving the above congruence
equation system modulo $p_i^{\alpha_i}$ we get that $y_1\equiv
\frac{l}{d}-y_2\not\equiv p_iu\pmod {p_{i}^{\alpha_i}}$ and
$y_2\not\equiv 0\pmod {p_{i}}$ (which is equivalent to
$y_2\not\equiv p_iv\pmod {p_{i}^{\alpha_i}}$), for $0\leq
u,v<p_i^{\alpha_i-1}-1$.
 Finally, we obtain $y_2\not\equiv
\{p_iv,\frac{l}{d}-p_iu\}\pmod {p_{i}^{\alpha_i}}$ and it can be
concluded that  the maximal number of values that $y_2$ modulo $p_i^{\alpha_i}$
can not take is equal to $2p_i^{\alpha_i-1}$. This
number is  less than the number of residues modulo $p_i^{\alpha_i}$
for $p_i>2$, so this
system has a solution in this case. Now, suppose that $\frac{n}{d}\in 2\N$.
In the
case for $p_i=2$ we observe that there also exists $y_2$ modulo $p_{i}^{\alpha_i}$ if $\frac{l}{d}\in 2\N$.
Furthermore, for $p_i=2$ and $\frac{l}{d}\in 2\N+1$, we have already concluded that there exists $y_1,y_2\in 2\N+1$ such that
$dy_1+dy_2\equiv l-d \pmod{p_i^{\alpha_i}}$ (notice that $\frac{l-d}{d}\in 2\N$) and therefore the representation
$dy_1+dy_2+d\equiv l \pmod{n}$ exists, under the constraints
$\gcd(dy_1,n)=d$ and $\gcd(dy_2,n)=d$.

\end{proof}

\begin{theorem}
\label{thm:d,d+1}
Let  $diam(\ICG_m(d_1,\ldots,d_t))=d>2$. For $n'>1$ such that
$\gcd(n',m)=1$ it holds that
\begin{eqnarray}
\label{eq:d,d+1}
diam(\ICG_{mn'}(d_1,\ldots,d_t))&=&\left\{ \begin{array}{rl}
d+1, & n' \in 2\N \\
d, & n' \in 2\N+1. \\
\end{array}\right.
\end{eqnarray}
\end{theorem}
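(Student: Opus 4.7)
The plan is to transfer distances between $\ICG_m(D)$ and $\ICG_{mn'}(D)$ via the Chinese Remainder Theorem. Since $\gcd(m,n')=1$ and every $d_i$ divides $m$, the edge relation in $\ICG_{mn'}(D)$ factors: $u$ and $v$ are adjacent iff $\gcd(u-v,m)\in D$ and $\gcd(u-v,n')=1$. The reduction $\Z_{mn'}\to\Z_m$ therefore carries any $0\to l$ walk in $\ICG_{mn'}$ to a walk of the same length from $0$ to $l\bmod m$ in $\ICG_m$, yielding the lower bound that the $\ICG_{mn'}$-distance from $0$ to $l$ is at least the $\ICG_m$-distance from $0$ to $l\bmod m$. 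Choosing $l$ whose reduction modulo $m$ realises the diameter of $\ICG_m$ gives $diam(\ICG_{mn'}(d_1,\ldots,d_t))\geq d$.

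For the upper bound I would lift $\ICG_m$-walks. Given a walk $(y_1,\ldots,y_q)$ in $\ICG_m$ with $\sum_i y_i\equiv l\pmod m$ and $\gcd(y_i,m)=d_{h(i)}$, use CRT to pick $x_i\in\Z_{mn'}$ satisfying $x_i\equiv y_i\pmod m$ and $x_i\equiv c_i\pmod{n'}$, where $c_1,\ldots,c_q\in\Z_{n'}$ are units with $\sum_i c_i\equiv l\pmod{n'}$. Then $\gcd(x_i,mn')=d_{h(i)}$ and $(x_1,\ldots,x_q)$ is a walk of length $q$ from $0$ to $l$ in $\ICG_{mn'}$. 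The lift is feasible precisely when $l\bmod n'$ can be written as a sum of $q$ units of $\Z_{n'}$; an inclusion--exclusion argument identical in spirit to the proof of Lemma \ref{lem:2 and 3 summands} shows that, for $q\geq 2$, every $a\in\Z_{n'}$ admits such a decomposition when $n'$ is odd, while for $n'$ even exactly those $a$ with $a\equiv q\pmod 2$ do.

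Putting it together, if $n'$ is odd then for every $l$ I would produce an $\ICG_m$-walk from $0$ to $l\bmod m$ of length $q$ with $2\leq q\leq d$---the corner cases $l\bmod m=0$ with $l\neq 0$, or $l\bmod m$ an $\ICG_m$-neighbour of $0$ while $l$ is not coprime to $n'$, are handled by appending a length-$2$ closed walk at $0$ or by replacing the offending single edge by a two-edge walk via Lemma \ref{lem:2 and 3 summands}, and the slack $d>2$ keeps the length at most $d$---yielding $diam(\ICG_{mn'}(d_1,\ldots,d_t))\leq d$. If $n'$ is even, then $m$ is odd, so $m/d_i$ is odd for every $d_i\in D$ and Lemma \ref{lem:2 and 3 summands} always permits replacing a single $\ICG_m$-edge by a pair of edges of the same type. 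Thus for every $l$ one can exhibit an $\ICG_m$-walk to $l\bmod m$ of length $q\leq d+1$ with $q\equiv l\pmod 2$, whose lift is a walk of length $\leq d+1$ in $\ICG_{mn'}$. For the matching lower bound, every edge label in $\ICG_{mn'}$ is coprime to $n'$ and hence odd, so any $0\to l$ walk has length $\equiv l\pmod 2$; picking $l\in\Z_{mn'}$ (by CRT, using $2\mid n'$) with $l\bmod m$ at $\ICG_m$-distance $d$ and $l\not\equiv d\pmod 2$ forces the distance from $0$ to $l$ to be at least $d+1$. The main obstacle throughout is the sum-of-units decomposition and the low-$q$ corner cases, both handled by variants of the argument in Lemma \ref{lem:2 and 3 summands}.
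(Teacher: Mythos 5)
Your proof is correct, and at its core it uses the same ingredients as the paper's: Lemma \ref{lem:2 and 3 summands} to repair short walks, a residue-counting argument modulo odd prime powers, and the parity obstruction at $2$ when $n'$ is even. What you do differently is the packaging, and it buys real clarity. The paper works directly with the congruence system $d_1'y_1+\cdots+d_s'y_s\equiv l \pmod{mn'}$ and case-splits on the number of summands $s=0$, $s=1$, $s>1$; you instead factor the edge relation via CRT into a projection $\Z_{mn'}\to\Z_m$ (giving the lower bound $\geq d$ essentially for free) and a lifting step whose sole obstruction is writing $l \bmod n'$ as a sum of $q$ units of $\Z_{n'}$ --- and your characterization of when that is possible ($q\geq 2$ always suffices for $n'$ odd; exactly the classes $a\equiv q\pmod 2$ for $n'$ even) is precisely the content of the paper's counting and parity arguments. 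Notably, you make explicit two points the paper leaves largely implicit: that the distance cannot drop under the extension (your projection argument), and that for $n'$ even some vertex genuinely requires $d+1$ steps (your choice of $l$ with $l\not\equiv d\pmod 2$ via CRT). One small imprecision: in the $n'$ odd case, $m$ may be even, so when you replace an "offending single edge" of type $d$ with $m/d$ even, Lemma \ref{lem:2 and 3 summands} forces a \emph{three}-edge walk rather than a two-edge one; this is harmless since $d>2$ absorbs it, and you do invoke that slack, but the statement "two-edge walk" should read "two- or three-edge walk". With that caveat the argument is complete and, in my view, a cleaner route to the same result.
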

\begin{proof}
Let $d_1'y_1+d_2'y_2+\cdots +d_s'y_s$ be a representation of $0\leq l\leq n'm-1$
modulo $m$ that satisfies the equations $\gcd(d_j'y_1,m)=d_j'$, such
that $0\leq s\leq d$ and $d_j'\in
\{d_1,\ldots,d_t\}$, $1\leq j\leq s$.

Now, we examine which additional conditions $y_1, y_2,\ldots,y_s$ must satisfy
by considering the following system
\begin{eqnarray*}
&&d_1' y_1 + d_2' y_2 +\cdots + d_s' y_s  \equiv l \pmod {mn'}\\
&&\gcd(d_j'y_j,mn')=d_j',\ 1\leq j\leq s.
\end{eqnarray*}

Since $\gcd(n',m)=1$, $\gcd(d_j'y_j,m)=d_j'$ and
$d_1'y_1+d_2'y_2+\cdots +d_s'y_s\equiv l \pmod {m}$, we conclude
that it is sufficient to find $y_1, y_2,\ldots,y_s$ such that for every $p_i\mid n'$ it holds

\begin{eqnarray}
&&d_1' y_1 + d_2' y_2 +\cdots + d_s' y_s  \equiv l \pmod {n'}\label{cond1:n'}\\
&&p_i\nmid y_j,\ 1\leq j\leq s,\ 1\leq i\leq u, \label{cond2:n'}
\end{eqnarray}
where $u$ denotes the number of prime factors of $n'$.

\medskip

Let $s=1$. From (\ref{cond1:n'}) and (\ref{cond2:n'}) follow that $y_1 \equiv d_1^{'-1}l\pmod {p_i^{\alpha_i}}$ and $y_1 \not\equiv 0 \pmod {p_i}$ must be satisfied,
for every $1\leq i\leq u$ .
But, if $p_i\mid l$ then  the system has no solution and we first proceed by considering the representation of $l$
in the following form
$d_1'y_1^{(1)}+d_1'y_1^{(2)}\equiv l \pmod{m}$ such that
$\gcd(d_1'y_1^{(1)},m)=d_1'$ and $\gcd(d_1'y_1^{(2)},m)=d_1'$. According to Lemma \ref{lem:2 and 3 summands} there exists such a representation if $\frac m {d_1'}\in 2\N+1$.
Moreover, using the same assertion there is a representation of $l$
in the following form
$d_1'y_1^{(1)}+d_1'y_1^{(2)}+d_1'\equiv l \pmod{m}$ such that
$\gcd(d_1'y_1^{(1)},m)=d_1'$ and $\gcd(d_1'y_1^{(2)},m)=d_1'$  if $\frac{m}{d_1'}\in 2\N$.

\medskip

Notice that $s=0$ if and only if $l$ is divisible by $m$. This means that $d_j\mid l$ for $1\leq j\leq t$ and hence
we can proceed as in the previous case by finding a representation of $l$ modulo $m$ on two or three summands depending on the parity of $m$.


\bigskip

Now, in the following we show that for every representation $d_1'y_1+d_2'y_2+\cdots +d_s'y_s\equiv l \pmod m$
with constraints $\gcd(d_j'y_1,m)=d_j'$ and $s>1$ there exists the representation (\ref{cond1:n'})
under the constraints (\ref{cond2:n'}) with $s$ or $s+1$ summands (depending on the parity of $n'$). Indeed, for any prime $p_i\mid n'$ ($1\leq
i\leq u$) where $S_{p_i}(n')=\alpha_i$, we have that
$\gcd(m,p_i)=1$ implying that $\gcd(d_j,p_i)=1$, for $1\leq j\leq
t$. Thus, from (\ref{cond1:n'}) and (\ref{cond2:n'}), we conclude
that
\begin{eqnarray*}
y_1&\equiv& (l-d_2'y_2-\cdots -d_s'y_s)d_1^{'-1}\pmod {p_i^{\alpha_i}}\\
y_1&\not\equiv& p_iv \pmod {p_i^{\alpha_i}},\ 0\leq
v<p_i^{\alpha_i-1}.
\end{eqnarray*}
From these two conditions it follows that
$(l-d_2'y_2-\cdots-d_s'y_s)d_1^{'-1}\not\equiv
p_iv\pmod{p_i^{\alpha_i}}$, $0\leq v<p_i^{\alpha_i-1}$, whence we
further obtain that
\begin{eqnarray*}
y_2&\not\equiv& (l-d_3'y_3-\cdots-d_s'y_s-p_id'_1v)d_2^{'-1}\pmod {p_i^{\alpha_i}}\\
y_2&\not\equiv& p_iw \pmod {p_i^{\alpha_i}},\ 0\leq
w<p_i^{\alpha_i-1}.
\end{eqnarray*}

Since $v$ and $w$ each may take $p_i^{\alpha_i-1}$ values, we conclude that the
maximal number of values modulo $p_i^{\alpha_i}$ that $y_2$ can not
take is equal to $2p_i^{\alpha_i-1}$. This number is obviously less
than the number of residues modulo $p_i^{\alpha_i}$ for $p_i>2$,
from where we further deduce that if $n'\in 2\N+1$, this system of
congruence equations has a solution modulo $n'$.

If $n'\in 2\N$ and we take $p_i=2$ then (\ref{cond2:n'}) implies that $y_1,\ldots
,y_s\in 2\N+1$ and as $d_1',\ldots ,d_s'$ are divisors of $m\in2\N+1$ it also holds that $d_1',\ldots ,d_s'\in 2\N+1$. On the other hand, if we choose
$l\not\equiv s \pmod 2$, then we obtain that $d_1'y_1+d_2'y_2+\cdots
+d_s'y_s\equiv s\not\equiv l\pmod 2$, which is a contradiction. This
means that we can not make the representation of $l$ into $s>0$
summands that satisfy the system of congruence equations
(\ref{cond1:n'}) and (\ref{cond2:n'}) for $n'\in 2\N$ and $l$
such that $l\not\equiv s \pmod 2$. However, since $l\not\equiv l-d_1'\pmod 2$, there is a
representation on $s$ summands of $l-d'_1$ and hence there is a representation on $s+1$ summands of $l$ such that $l\not\equiv
s \pmod 2$.

\smallskip

From the above discussion we conclude that for $n'\in 2\N+1$ ($m\in 2\N$)
the diameter of $\ICG_{mn'}(d_1,\ldots,d_t)$ is equal to $\max\{3,d\}=d$. On the other hand,
if $n'\in 2\N$ ($m\in 2\N+1$)
the diameter of $\ICG_{mn'}(d_1,\ldots,d_t)$ is equal to $\max\{2,d+1\}=d+1$.

\end{proof}

\begin{remark}
\label{rem:1}
Notice that for the complete graph $\ICG_m(D_m)$ the diameter of $\ICG_{mn'}(D_m)$ is equal to $2$ if $n'\in 2\N+1$ and equal to $3$ if $n'\in 2\N$.
Indeed, since for every edge $uv$ there exists the path $u-w-v$ we conclude that for every $0\leq l\leq m-1$ there exists the representation (\ref{congruence n}) modulo $n$ on $s =2$ summands and from the second part of the above proof the assertion directly holds.
Moreover, for the integral circulant graph such that $diam(\ICG_{m}(D))=2$ and $m\in 2\N+1$, according to the above proof we have that $diam(\ICG_{mn'}(D))=3$.
On the other hand, if $m\in 2\N$ and for every $0\leq l\leq m-1$ there exists a path $0-l_1-l$ for some $0\leq l_1\leq l\leq m-1$ the diameter of $\ICG_{mn'}(D)$ remains the same
as the diameter of $\ICG_{m}(D)$. Otherwise, $diam(\ICG_{mn'}(D))=3$.
\end{remark}

\bigskip

From Theorem \ref{thm:d,d+1} it follows that
$diam(\ICG_{n\cdot n'}(d_1,\ldots,d_k))\leq$\\ $ diam(\ICG_n(d_1,\ldots, d_k))+1$, for some $n'>1$ and $\gcd(n,n')=1$. Furthermore, by Theorem
\ref{thm:r(n)} we have $diam(\ICG_n(d_1,\ldots,d_k))\leq r(n)$, which implies
that $diam(\ICG_{n\cdot n'}(d_1,\ldots,d_k))\leq r(n)+1$. Finally, from the definition of the
function $r$, since $\gcd(n,n')=1$ and $n'>1$, follows that $r(n)+1\leq r(n \cdot n')$ and therefore $diam(\ICG_{n\cdot n'}(d_1,\ldots,d_k))\leq r(n\cdot n')$ holds.
Therefore, we can not increase the upper bound given in Theorem
\ref{thm:r(n)} of the diameter of integral circulant graphs using  Theorem \ref{thm:d,d+1}, but we can find new classes of graphs
that attain the bound.

We further see that the equation $diam(\ICG_{n\cdot n'}(d_1,\ldots,d_k))=r(n \cdot n')$ holds if and only if
$diam(\ICG_{n\cdot n'}(d_1,\ldots,d_k))= diam(\ICG_n(d_1,\ldots, d_k))+1$, $diam(\ICG_n(d_1,\ldots, d_k))=r(n)$ and $r(n)+1=r(n \cdot n')=r(n)+r(n')$.
By Theorem \ref{thm:d,d+1} we obtain that $diam(\ICG_{n\cdot n'}(d_1,\ldots,d_k))= diam(\ICG_n(d_1,\ldots, d_k))+1$ if and only if $n'\in 2\N$ and $n\in 2\N+1$.
Furthermore, since $n\in 2\N+1$, the diameter of $\ICG_n(d_1,\ldots, d_k)$ can attain the value
$r(n)$ only in the case given by the first part of the assertion of
Theorem \ref{thm:r(n)}. Finally, $r(n)+1=r(n)+r(n')$ holds if and only if $n'$ is prime and this proves the following theorem.

\begin{theorem}
\label{thm:main}
Let  $diam(\ICG_n(d_1,\ldots,d_k))>2$. The diameter of the integral circulant graph $ \ICG_{n\cdot n'} (d_1,
d_2,\ldots, d_k)$,  where $n'>1$, $\gcd(n,n')=1$ and the set of the divisors $\{d_1, d_2,\ldots,
d_k\}$ satisfies the properties (\ref{property divisors}) and (\ref{property divisors1}) with respect to primes $\{p_1,\ldots p_k\}$, is less or
equal to $r(n\cdot n')$. The equality holds if and only if
$n'=2$, $n\in 2\N+1$ and the sets of divisors $D\subseteq 2\N+1$ satisfies
the conditions $i)$ in the assertion of Theorem \ref{thm:r(n)}.
\end{theorem}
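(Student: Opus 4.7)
The plan is to chain together Theorem \ref{thm:d,d+1} and Theorem \ref{thm:r(n)} exactly in the way foreshadowed by the paragraph preceding the statement. First I would establish the upper bound: by Theorem \ref{thm:d,d+1} (applied with $m=n$, noting $\gcd(n,n')=1$), we have $diam(\ICG_{n\cdot n'}(d_1,\ldots,d_k))\leq diam(\ICG_n(d_1,\ldots,d_k))+1$, and by Theorem \ref{thm:r(n)} the right-hand side is bounded by $r(n)+1$. A direct check from the definition $r(m)=k(m)+|\{i\ |\ \alpha_i>1\}|$ gives $r(n\cdot n')=r(n)+r(n')$ whenever $\gcd(n,n')=1$, and since $n'>1$ forces $r(n')\geq 1$, we obtain $diam(\ICG_{n\cdot n'}(d_1,\ldots,d_k))\leq r(n)+1\leq r(n\cdot n')$.

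Next I would analyze when this chain of inequalities becomes a chain of equalities. Equality throughout requires simultaneously: (a) $diam(\ICG_{n\cdot n'}(d_1,\ldots,d_k))=diam(\ICG_n(d_1,\ldots,d_k))+1$; (b) $diam(\ICG_n(d_1,\ldots,d_k))=r(n)$; and (c) $r(n)+1=r(n\cdot n')$, i.e.\ $r(n')=1$. Condition (c) holds precisely when $n'$ is a prime power $p^{\alpha}$ with $\alpha=1$, so $n'$ must be prime. Condition (a), by the case distinction in Theorem \ref{thm:d,d+1}, forces $n'\in 2\N$ and $n\in 2\N+1$; combined with $n'$ being prime this pins down $n'=2$. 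Finally, with $n$ odd, the assumption $n\in 4\N+2$ from case $ii)$ of Theorem \ref{thm:r(n)} cannot be met, so condition (b) can hold only through case $i)$ of that theorem, which is exactly the hypothesis in the statement. Note also that $n\in 2\N+1$ forces each $d_i\mid n$ to be odd, so the requirement $D\subseteq 2\N+1$ is automatic.

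Conversely, I would verify sufficiency: assume $n'=2$, $n\in 2\N+1$, and the configuration from case $i)$ of Theorem \ref{thm:r(n)}. Then $diam(\ICG_n(d_1,\ldots,d_k))=r(n)$ by Theorem \ref{thm:r(n)}, and the hypothesis $diam(\ICG_n(d_1,\ldots,d_k))>2$ guarantees we are in the $\max\{2,d+1\}=d+1$ regime of Theorem \ref{thm:d,d+1}, so $diam(\ICG_{2n}(d_1,\ldots,d_k))=r(n)+1=r(n)+r(2)=r(n\cdot n')$.

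The only slightly delicate point is justifying $r(n\cdot n')=r(n)+r(n')$ for coprime $n,n'$, but this is immediate because coprimality makes the prime factorisations disjoint so both $k(\cdot)$ and the count of exponents exceeding $1$ add. The genuine work has already been done in Theorems \ref{thm:r(n)} and \ref{thm:d,d+1}; the present theorem is essentially a synthesis of those, and the main care is simply in bookkeeping the four conditions that must simultaneously be equalities and checking that each of the trivial reductions ($n'$ prime, $n'$ even, $n$ odd, case $i)$) is forced.
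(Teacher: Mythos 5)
Your proposal is correct and follows essentially the same route as the paper: both arguments chain Theorem \ref{thm:d,d+1} with Theorem \ref{thm:r(n)}, use the additivity $r(n\cdot n')=r(n)+r(n')$ for coprime arguments to get the upper bound, and then decompose equality into the same three conditions (diameter increases by one, $diam(\ICG_n(D))=r(n)$, and $r(n')=1$), from which $n'=2$, $n$ odd, and case $i)$ of Theorem \ref{thm:r(n)} are forced exactly as in the paper. Your explicit observations that "$n'$ prime and even" pins down $n'=2$ and that $D\subseteq 2\N+1$ is automatic for odd $n$ are minor clarifications of steps the paper leaves implicit.
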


We have already mentioned that Saxena, et al. determine the upper bound of the diameter of
$\ICG_n(d_1,\ldots,d_t)$ in terms of the number of divisors $t$,
i.e. $diam(\ICG_n(d_1,\ldots,d_t))\leq 2t+1$. They also present a class of
integral circulant graphs in Theorem 5 \cite{severini}, where $n=2m$, $m=p_1^2\cdots p_k^2$ and $D_0=\{m/p_1^2,\ldots,m/p_k^2\}$  as an example for which the bound is tight.
However, using a similar approach as in the proof of Theorem \ref{thm:main} we can characterize all the graphs $\ICG_{n\cdot n'}(d_1,\ldots,d_k)$ with the diameters equal to $2k+1$.
First, notice that
$$
diam(\ICG_{n\cdot n'}(d_1,\ldots,d_k))\leq diam(\ICG_n(d_1,\ldots, d_k))+1\leq r(n)+1\leq 2k+1.
$$
The equality holds if and only if $n'\in 2\N$, $\alpha_1>1,\ldots,\alpha_k>1$ and and the set of divisors $\{d_1,\ldots,d_k\}\subseteq 2\N+1$ satisfies
the conditions $i)$ in the assertion of Theorem \ref{thm:r(n)}.
Finally, the diameter of $\ICG_{n\cdot n'}(d_1,\ldots,d_k)$, for $\gcd(n,n')=1$, attains the upper bound $2k+1$ if and only if $n'\in 2\N$ and for every $d_j\in 2\N+1$, $1\leq j\leq k$, $p_{j}\nmid d_j$, $p_{i}^2\mid d_j$, $1\leq i\neq j\leq k$.
Clearly, the class of graphs $\ICG_{2m}(D_0)$ found in Theorem 5 in \cite{severini} has the smallest parameters $n$ and $d_1, \ldots, d_k$ that satisfy the  condition above.

\smallskip

According to Remark \ref{rem:1} we can derive new classes of graphs whose maximal diameters attain the upper bounds $r(n)$ or $r(n)+1$ in some special cases

\begin{theorem}
For a given positive integer $n>1$, the diameter of
$\ICG_n(D)$ is equal to $r(n)$ in the following cases
\begin{itemize}
\item[i)] $n=p_1p_2\in 2\N+1$, $D=\{1\}$,
\item[ii)] $n=4p_2$, $D=\{1\}$,
\item[iii)] $n=2p_2p_3$, $D\in\{\{1\},\{1,p_2\},\{2,p_2\},\{p_2,p_3\},\{1,p_2,p_3\}\}$,
\item[iv)] $n=2p_2^2$, $D\in\{\{1\},\{1,p_2\}\}$.
\end{itemize}
The diameter of $\ICG_n(D)$ is equal to $r(n)+1$ if
$n=2p_2$, $D=\{1\}$.
\end{theorem}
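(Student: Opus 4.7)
The plan is to verify each of the six listed configurations individually, in each instance first computing $r(n)$ and then establishing matching upper and lower bounds on the diameter of $\ICG_n(D)$. The main engines are Remark~\ref{rem:1}, which transfers the diameter from a smaller base graph $\ICG_m(D)$ to an extension $\ICG_{mn'}(D)$ with $\gcd(m,n')=1$, and the CRT-based vertex construction used in the proof of Theorem~\ref{thm:r(n)}.

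The values of $r(n)$ are immediate: $r(p_1p_2)=r(2p_2)=2$, while $r(4p_2)=r(2p_2 p_3)=r(2p_2^2)=3$. For the upper bounds I would realize each $\ICG_n(D)$ as an extension of a smaller base graph. In (i), the base $\ICG_{p_1}(\{1\})=K_{p_1}$ is complete and $n'=p_2$ is odd, so the first sentence of Remark~\ref{rem:1} yields diameter $2$. In (ii), the base $\ICG_4(\{1\})$ is a $4$-cycle with no length-$2$ path $0-l_1-1$ through an intermediate vertex distinct from the endpoints, so the ``otherwise'' branch of Remark~\ref{rem:1} with $n'=p_2$ gives diameter $3$. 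For (iii) and (iv), analogous decompositions of the form $m=p_2p_3,\ n'=2$ or $m=p_2^2,\ n'=2$ produce odd-order bases of diameter $2$, and Remark~\ref{rem:1} upgrades these to diameter $3$; for those divisor sets strictly larger than $\{1\}$ the upper bound additionally follows from the subgraph inclusion $\ICG_n(\{1\})\subseteq \ICG_n(D)$, since enlarging $D$ can only add edges. Finally, for $n=2p_2$, $D=\{1\}$ the base $\ICG_{p_2}(\{1\})=K_{p_2}$ is complete and $n'=2$ is even, so Remark~\ref{rem:1} directly gives diameter $3=r(n)+1$.

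For the matching lower bounds I would exhibit, in each case, a vertex $l_0$ at the asserted maximal distance from $0$, chosen by CRT so that $l_0$ is ``badly placed'' modulo each prime power factor. The prototypical recipe is $l_0\equiv -1\pmod{p_i}$ for primes with $\alpha_i=1$ and $l_0\equiv p_j\pmod{p_j^{\alpha_j}}$ when $\alpha_j>1$, as in Theorem~\ref{thm:r(n)}. Parity at the prime $2$ is decisive: for example, when $D=\{1\}$ and $l_0$ is chosen odd (say $l_0=p_2$), any length-$2$ path $0-a-l_0$ would consist of one odd and one even difference, violating $\gcd(\cdot,n)=1$, so $l_0$ has distance at least $3$. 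In each case the gcd constraints (\ref{congruence gcd}) rule out every representation with fewer than the claimed number of summands.

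The main obstacle is the bookkeeping in case (iii), which lists five distinct divisor sets. For each $D$ the admissible pool of summands is different, and the choice of $l_0$ must be tailored accordingly: when $D=\{2,p_2\}$ the role previously played by $d=1$ at the prime $2$ is taken over by $d=2$, while for $D=\{p_2,p_3\}$ no element of $D$ reaches the prime $2$, so the obstruction at $2$ comes from a parity count echoing the $u_j\geq 2$ subcase of the proof of Theorem~\ref{thm:r(n)}. Once the five subcases in (iii) and the two subcases in (iv) are individually checked, the diameter bounds agree on both sides and the theorem follows.
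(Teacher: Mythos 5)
Your overall strategy coincides with the paper's: both proofs reduce each listed graph to a smaller base $\ICG_m(D)$ with $\gcd(m,n')=1$ and invoke Remark~\ref{rem:1} (equivalently Theorem~\ref{thm:d,d+1}) to read off the diameter of the extension; your computations of $r(n)$, your treatment of cases (i), (ii), (iv) and of the $n=2p_2$ case, and your parity-based lower bounds for the $D=\{1\}$-type sets are all sound and match what the paper does (the paper in fact skips the explicit $l_0$ constructions, since Remark~\ref{rem:1} already delivers exact values rather than mere upper bounds, so that part of your plan is redundant but harmless).

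There is, however, one concrete step in case (iii) that fails as written: for $D=\{2,p_2\}$ you propose the decomposition $m=p_2p_3$, $n'=2$, but $2$ is not a divisor of $p_2p_3$, so the base graph $\ICG_{p_2p_3}(\{2,p_2\})$ is not defined; your fallback via the inclusion $\ICG_n(\{1\})\subseteq\ICG_n(D)$ also does not apply here because $1\notin\{2,p_2\}$. The paper handles this set by taking the \emph{even} base $m=2p_3$ with $D=\{2,p_3\}$ (diameter $2$) and odd $n'=p_2$, and then using the last branch of Remark~\ref{rem:1}: the vertex $l=p_3$ is adjacent to $0$ in $\ICG_{2p_3}(\{2,p_3\})$ but admits no two-step path $0-l_1-l$ (a mixed even-plus-odd sum is forced to be $\equiv 0 \pmod{p_3}$ in its even summand, which is forbidden), so the extension has diameter $3$. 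Your final paragraph does flag $\{2,p_2\}$ as needing tailoring, but only for the choice of $l_0$ in the lower bound; note also that the generic witness $l_0\equiv -1 \pmod{p}$ for the unramified primes does not work here (e.g.\ $l_0=p_3$ is at distance $2$ in $\ICG_{2p_2p_3}(\{2,p_2\})$), and one must instead take $l_0$ odd and divisible by $p_2$, such as $l_0=p_2p_3$. Once this subcase is rerouted through the even-base branch of Remark~\ref{rem:1}, your argument closes.
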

\begin{proof}
Notice that $diam(\ICG_{n\cdot n'}(D))=3$ if $diam(\ICG_n(D))=1$ and $n'\in 2\N$. We see that $diam(\ICG_{n\cdot n'}(D))$ attains $r(n\cdot n')$ if
$n\cdot n'\in\{2p_2^2,4p_2,2p_2p_3\}$. Since $\gcd(n,n')=1$, we further have $(n',n)\in\{(2,p_2^2), (4,p_2), (2,p_2p_3), (2p_2,p_3)\}$. If $n=p_2$ then $diam(\ICG_n(D))=1$ for $D=\{1\}$, if $n=p_2^2$ then $diam(\ICG_n(D))=1$ for $D=\{1,p_2\}$ and if $n=p_2p_3$ then $diam(\ICG_n(D))=1$ for $D=\{1,p_2,p_3\}$.
Furthermore, $diam(\ICG_{n\cdot n'}(D))$ attains $r(n\cdot n')+1$ if
$n\cdot n'=2p_2$ and therefore $(n',n)=(2,p_2)$, $D=\{1\}$.

The diameter of $\ICG_{n\cdot n'}(D)$ is equal to $2$ if $diam(\ICG_n(D))=1$ and $n'\in 2\N+1$. Moreover, $diam(\ICG_{n\cdot n'}(D))$ attains $r(n\cdot n')$ if $(n,n')=(p_1,p_2)$ and $diam(\ICG_n(D))=1$ if $D=\{1\}$.

The diameter of $\ICG_{n\cdot n'}(D)$ is equal to $3$ if $diam(\ICG_n(D))=2$ and $n\in 2\N+1$. Moreover, $diam(\ICG_{n\cdot n'}(D))=3$ attains $r(n\cdot n')$ if $(n',n)\in\{(2,p_2p_3), (2p_2,p_3),(2,p_2^2)\}$. If $n=p_2p_3$ then $diam(\ICG_n(D))=2$ for $D\in\{\{1\},\{1,p_2\},\{p_2,p_3\}\}$, and  if $n=p_2^2$ then $diam(\ICG_n(D))=2$ for $D=\{1\}$.

The diameter of $\ICG_{n\cdot n'}(D)$ is equal to $3$ if $diam(\ICG_n(D))=2$, $n\in 2\N$ and there exists $0\leq l\leq n-1$ such that $(0,l)$ are adjacent and there is no $0\leq l_1\neq l\leq n-1$ such that $(0,l_1)$ are adjacent and $(l_1,l)$ are adjacent.
Moreover, $diam(\ICG_{n\cdot n'}(D))=3$ attains $r(n\cdot n')$ if $(n',n)\in\{(p_2p_3,2), (p_2,2p_3),(p_2^2,2), (p_2,4)\}$. If $n=2p_3$ then $D\in\{\{1,p_3\},\{2,p_3\}\}$ and if $n=4$ then $D=\{1\}$.

\end{proof}

\bigskip

In the following assertion without loss of generality we assume that the indices $\{i_1,\ldots,i_t\}$ from the properties
(\ref{property divisors}) and (\ref{property divisors1}) are equal $\{1,\ldots,t\}$.
Furthermore, we also suppose that for every prime $p_i$, $1\leq i\leq k$, there exists at least one $d_j$, $1\leq j\leq t$, such that $p_i\mid d_j$ (in the opposite case we have already calculated the diameter  in Theorem \ref{thm:main}).

\begin{theorem}
\label{thm:t<k}
Let $ \ICG_n (d_1,
d_2,\ldots, d_t)$ be an integral circulant graph such that $t < k$ and the set of the divisors $D=\{d_1, d_2,\ldots,
d_t\}$ satisfies the properties (\ref{property divisors}) and (\ref{property divisors1}) with respect to primes $\{p_{1},p_{2},\ldots,p_{t}\}$.
Then, the diameter of the of the graph is less or equal to
\begin{itemize}
\item[i)] $r(n)$, if $n\not\in 4\N+2$ and there exist at least two divisors $p_i$ and $p_j$, $1\leq i\neq j\leq k$, such that $\alpha_i=\alpha_j=1$. The equality holds if and only if $\{d_1,\ldots, d_t\}$ satisfies ($i$) from Theorem \ref{thm:r(n)}, there exist $p_1,\ldots,p_t\in 2\N+1$ and an injection $f:\{{t+1},\ldots,k\}\rightarrow\{1,\ldots,t\}$ such that for every prime
    $p_j$, $t+1\leq j\leq k$, there exists exactly one $d_{f(j)}$ such that $p_j\nmid d_{f(j)}$ and $\alpha_j=\alpha_{f(j)}=1$.

\item[ii)] $r(n)+1$, if $n\in 4\N+2$ and there exists at least one divisor $p_i\in 2\N+1$, $1\leq i\leq k$, such that $\alpha_i=1$. The equality holds if and only if $\{d_1,\ldots, d_t\}$ satisfies ($i$) from Theorem \ref{thm:r(n)}, there exist $p_1,\ldots,p_t\in 2\N+1$ and an injection $f:\{{t+1},\ldots,k\}\rightarrow\{1,\ldots,t\}$ such that for every prime
    $p_j$, $t+1\leq j\leq k$, there exists exactly one $d_{f(j)}$ such that $p_j\nmid d_{f(j)}$ and $\alpha_j=\alpha_{f(j)}=1$.
\item[iii)] $2t$, if $n\in 2\N+1$ and there exists at most one divisor $p_i$, $1\leq i\leq k$, such that $\alpha_i=1$. The equality holds if and only if there exist $p_1,\ldots,p_t$ such that for every $d_j$, $1\leq j\leq t$, $p_{i}^2\mid d_j$, $1\leq i\neq j\leq t$.
    
\item[iv)] $2t+1$, if $n\in 2\N$ and there exists at most one divisor $p_i$, $1\leq i\leq k$, such that $\alpha_i=1$. The equality holds if and only if there exist $p_1,\ldots,p_t\in 2\N+1$ such that for every $d_j$, $1\leq j\leq t$, $p_{i}^2\mid d_j$, $1\leq i\neq j\leq t$.
\end{itemize}
\end{theorem}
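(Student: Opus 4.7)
The plan is to extend the Chinese-remainder analysis of Lemma \ref{lem:auxiliary} and Theorem \ref{thm:r(n)} to the subcritical regime $t<k$. Fix a target vertex $l$ and, as before, seek a representation
\[
l\equiv d_1y_1+\cdots+d_ty_t\pmod n,\qquad \gcd(d_jy_j,n)=d_j,
\]
where each $y_j$ may be split as $y_j^{(1)}+\cdots+y_j^{(m_j)}$ to respect the coprimality constraints; the number of edges in the resulting $0\to l$ path equals $\sum m_j$. For every prime $p_i\mid n$ set $T_i=\{s\le t\,:\,p_i\nmid d_s\}$, so $T_i=\{i\}$ for $1\le i\le t$ by (\ref{property divisors}) and (\ref{property divisors1}), while $|T_j|\ge 1$ for $t<j\le k$ by connectedness together with the standing hypothesis that every prime divides some divisor. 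The argument is then local: for each $p_i$, I bound how many summands with index in $T_i$ are needed to hit the prescribed residue modulo $p_i^{\alpha_i}$.

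For the upper bound, two regimes appear. For primes $i\le t$, the analysis of Lemma \ref{lem:auxiliary} yields at most one summand from $d_i$ when $\alpha_i=1$, and at most two when $\alpha_i>1$. For primes $j>t$, the situation splits by $|T_j|$: when $|T_j|\ge 2$, the $u_j\ge 2$ calculation of Theorem \ref{thm:r(n)} (re-interpreted with multiple distinct divisors from $T_j$) shows the residue modulo $p_j^{\alpha_j}$ is reachable without any extra edges; when $|T_j|=1$ with unique element $s_0$, at most one additional split of $y_{s_0}$ is needed, analogous to the $u_j=1$ subcase. Summing these local counts and accounting for the even-parity obstruction of Theorem \ref{thm:d,d+1} yields the four bounds: in (iii) and (iv) each of the $t$ divisors tolerates one double-split, giving $2t$ (plus $+1$ when $n\in 2\N$), while in (i) and (ii) the presence of many unit-exponent primes lets the count collapse to $r(n)$ (plus $+1$ when $n\in 4\N+2$).

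For the lower bound I will construct an extremal $l_0$ via the Chinese Remainder Theorem, as in the proof of Theorem \ref{thm:r(n)}: set $l_0\equiv -1\pmod{p_i}$ for every $p_i$ with $\alpha_i=1$ and $l_0\equiv p_i\pmod{p_i^{\alpha_i}}$ for every $p_i$ with $\alpha_i>1$, inserting a $p_j^2$--twist when $n\in 4\N+2$ and a parity-sensitive prime $p_j$ is present. The equality conditions then pinpoint precisely the configurations of $D$ for which no representation of $l_0$ can save an edge. In (iii) and (iv), requiring $p_i^2\mid d_j$ for all $i\ne j$ means no single split of any $y_j$ can simultaneously correct two distinct non-trivial residues, so every divisor genuinely costs two summands. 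In (i) and (ii), the injection $f:\{t+1,\ldots,k\}\to\{1,\ldots,t\}$ with $\alpha_{f(j)}=\alpha_j=1$ and $T_j=\{f(j)\}$ forces each extra prime to borrow a unit-exponent responsibility from the associated $d_{f(j)}$; precisely under this configuration the single edge on $d_{f(j)}$ cannot meet both the $p_{f(j)}$-- and $p_j$--constraints of $l_0$ at once.

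The main obstacle I anticipate is the converse direction of the equality statements. When some extra prime $p_j$ has $|T_j|\ge 2$, or some $\alpha_{f(j)}>1$, or the condition $p_i^2\mid d_j$ fails, one should be able to exhibit for every $l$ a representation with one fewer summand by letting a shared divisor absorb two prime constraints at once. Verifying this uniformly demands a case analysis closely parallel to the $u_j\in\{0,1,2\}$ dichotomy of Theorem \ref{thm:r(n)}, but with the added complication that the ``absorbing'' divisor is implicitly determined by $T_j$ and $f$ rather than directly named; this is where the bulk of the technical work, and the greatest risk of subtle error, will concentrate.
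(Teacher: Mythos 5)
Your overall strategy is the same as the paper's: a local, prime-by-prime edge count in the representation $d_1y_1+\cdots+d_ty_t\equiv l\pmod{p_j^{\alpha_j}}$, with the set $T_j$ playing exactly the role of the paper's quantity $u_j=|\{d_i : p_j\nmid d_i\}|$, the dichotomy $|T_j|\ge 2$ versus $|T_j|=1$ for the extra primes $p_j$, $t<j\le k$, and an extremal $l_0$ built by the Chinese Remainder Theorem as in Theorem \ref{thm:r(n)}. However, as written the proposal has genuine gaps. First, the upper-bound accounting is too coarse at the prime $2$: when $|T_j|=1$ and $p_j=2$ the parity obstruction forces a split of $y_{s_0}$ into \emph{three} coprime summands (Lemma \ref{lem:2 and 3 summands}), not one additional summand, and it is precisely the resulting count $r(p_1^{\alpha_1})+r(p_j^{\alpha_j})+1$, together with the separate treatment of the subcase $|T_j|=2$, $p_j=2$, that separates $r(n)+1$ (case ii, $n\in4\N+2$) from $r(n)$ (case i, $n\in4\N$). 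Your phrase ``accounting for the even-parity obstruction'' names the issue but does not carry out the case split that actually decides which of the four bounds applies. Second, in cases iii) and iv) you assert that ``each of the $t$ divisors tolerates one double-split, giving $2t$,'' but the hypothesis there allows one prime $p_i$ with $\alpha_i=1$; in that subcase one divisor contributes only a single summand, the raw count is $2t-1$, and one must argue that the possible $+1$ from $p_j=2$ still cannot push the total to $2t+1$. This subcase is not addressed at all.

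The most substantial gap is the equality characterization. Each of the four items is an ``if and only if,'' and you explicitly defer the converse direction (showing that whenever the stated configuration of $D$ fails, every $l$ admits a representation with fewer summands) to future work, acknowledging it as the place of greatest risk. That converse is where the paper does most of its work: it must show, for example, that if some extra prime $p_j$ has $u_j\ge 2$, or if two extra primes both miss the same $d_i$, or if some $\alpha_{f(j)}>1$, or if $p_1=2$, then an edge can always be saved; and conversely it must exhibit a concrete $l_0$ (via the CRT system $l_0\equiv-1\pmod{p_i}$ for $\alpha_i=1$, $l_0\equiv p_i\pmod{p_i^{\alpha_i}}$ for $\alpha_i>1$) that genuinely costs the claimed number of edges under the stated conditions. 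Until that case analysis is executed, the proposal establishes at best the upper bounds in the easy regimes and a plausible plan for the rest, not the theorem.
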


\begin{proof}
First we prove the parts $i)$ and $ii)$ of the assertion.
According to the proof of Lemma \ref{lem:auxiliary} we show that for every prime factor $p_j$, $1\leq j\leq t$, we need at most one edge if $\alpha_j=1$ and two edges if $\alpha_j>1$ in order to construct a path from $0$ to $l$, $0\leq l\leq n-1$. Therefore, we can write
\begin{eqnarray}
\label{eqn:modulo pjalphaj}
d_1 y_1 + d_2 y_2 +\cdots + d_t y_t \equiv l \pmod {p_{j}^{\alpha_{j}}},
\end{eqnarray}
for $1\leq j\leq t$, where some $y_i$, $1\leq i\leq t$, need to be split into two summands $y_i'+y_i''$ for $\alpha_i>1$ and certain values of $l$ modulo $p_{j}^{\alpha_{j}}$.
Notice also that some of $y_i$ can be equal to zero.

In the following, we examine how many edges we can add at the most to obtain the diameter path when we consider modulus $p_j$, $t+1\leq j\leq k$.
Now, let $p_j$ be a prime factor of $n$ such that $t+1\leq j\leq k$. Observe that the relation (\ref{eqn:modulo pjalphaj}) can be
rewritten in the following form
\begin{eqnarray}
\label{auxiliary form}
s_1+\cdots+s_{u_j}+p_j(s_{u_j+1}+\cdots+s_{t}) \equiv l \pmod {p_{j}^{\alpha_{j}}},
\end{eqnarray}
where $u_j=|\{d_i\ |\ i\in\{1,\ldots,t\},\
 \ p_j\nmid d_i\}|$, $s_i=d_i y_i$, for $1\leq i\leq u_j$ and $p_is_i=d_i y_i$, for $u_j+1\leq i\leq t$.

Suppose $p_j>2$.
Using a  consideration similar to the one given in the proof of Theorem \ref{thm:r(n)} for the existence of the solution $t_1,\ldots,t_{k-1}$ in the equation
(\ref{eqn: mod l})
 we can conclude in the same way that there exist $s_1,\ldots,s_t$ modulo $p_j$ for $u_j>1$, such that $p_j\nmid y_1,\ldots,y_t$. This means that we do not need any additional edges for the path from $0$ to $l$
 corresponding to the modulus $p_j$.
However, if $u_j=1$ then it must be that $p_j\mid s_1$ for $l$ divisible by $p_j$, which is a contradiction.
Furthermore, as $p_j\in 2\N+1$  and $p_j\nmid d_1$, according to Lemma \ref{lem:2 and 3 summands} it holds that $y_1$ can be split into two summands $y_1'+y_1''$,
since there exist $y_1'$ and $y_1''$ not divisible by $p_j$ such that $y_1'+y_1''\equiv (l-p_jt)d_1^{-1} \pmod {p_j^{\alpha_j}}$, where $p_jt=d_{u_j+1}y_{u_j+1}+\cdots+d_ty_t$.
Therefore, in this case we need two edges for the parts of the path corresponding to the moduli $p_1$ and $p_j$.
Finally, we conclude that the path length is increased by one if one edge for the part of the path corresponding to the modulus $p_1$ is needed and that is for $\alpha_1=1$.
Also notice that these two edges can reach the value $r(p_1^{\alpha_1})+r(p_j^{\alpha_j})$ if and only if $\alpha_j=1$.
If there are two primes $p_{j_1}>2$ and $p_{j_2}>2$ not dividing $d_1$, for $t+1\leq j_1,j_2\leq k$, it can be immediately seen that there exist $y_1',y_1''$ modulo $p_{j_i}$ such that
$p_{j_i}\nmid y_1',y_1''$, $1\leq i\leq 2$ and path length can not be increased.
 Thus we prove that the path length is increased by one if and only if an arbitrary divisor is not divisible by exactly two prime factors $p_i$ and $p_j$, where $1\leq i\leq t$, $t+1\leq j\leq k$ and $\alpha_i=\alpha_j=1$. Now, if we take $l$ such that the equation (\ref{eqn:modulo pjalphaj}) has maximal number of summands and that is $r(p_1^{\alpha_1}\cdots p_t^{\alpha_t})$, we obtain that the diameter of the graph is equal to $r(n)$ for $n\in 2\N+1$.
In this way, we also prove that the diameter is smaller than $r(n)$ and $r(n)+1$, for $n\in 4\N$ and $4\N+2$, respectively if $p_1=2$.
Therefore, in the following we assume that $p_1,\ldots,p_t\in 2\N+1$.

\smallskip


Suppose now that $n\in 2\N$. If $p_j=2$, similarly to the  discussion above we can conclude that there exist $s_1,\ldots,s_t$ modulo $p_j$ for $u_j>2$ and we do not need any additional edges for the path from $0$ to $l$. Furthermore, if $u_j=1$, according to Lemma \ref{lem:2 and 3 summands} it holds that $y_1$ can be split into three summands $y_1'+y_1''+y_1'''$, $p_j\nmid y_1',y_1'',y_1'''$.
In this case we need three edges for the parts of the path corresponding to the moduli $p_1$ and $p_j$ and these three edges reach the value
$r(p_1^{\alpha_1})+r(p_j^{\alpha_j})+1$ if and only if $\alpha_1=\alpha_j=1$. This proves that under the conditions given in the part $ii)$ of the assertion the diameter of the graph attains $r(n)+1$. Moreover, if $n\in 4\N$ then $r(p_1^{\alpha_1})+r(p_j^{\alpha_j})+1>3$ and $r(p_1^{\alpha_1})+r(p_j^{\alpha_j})=3$ if and only if $\alpha_1=1$. Thus
we conclude that under the conditions from the part $i)$ the diameter of the graph attains $r(n)$, for $n\in 4\N$.

To prove the assertions $i)$ and $ii)$ completely, we need to prove that if $p_j=2$ and $u_j=2$ the path from $0$ to $l$ can not be greater or equal $r(n)+1$ for $n\in 4\N+2$ and
$r(n)$ for $n\in 4\N$. Indeed, according to Lemma \ref{lem:2 and 3 summands} again, we need three edges ($y_1',y_1''$ and $y_2$) for the parts of the path corresponding to the moduli $p_1$, $p_2$ and $p_j$ and since $r(p_1^{\alpha_1})+r(p_2^{\alpha_2})+r(p_j^{\alpha_j})$ is always greater or equal to three, it can be seen that
$r(p_1^{\alpha_1})+r(p_2^{\alpha_2})+r(p_j^{\alpha_j})+1> 3$ for $n\in 4\N+2$ and $r(p_1^{\alpha_1})+r(p_2^{\alpha_2})+r(p_j^{\alpha_j})> 3$ for $n\in 4\N$.

\bigskip

Now, we are going to prove the parts $iii)$ and $iv)$ of the assertion. First, suppose that there exist $p_1,\ldots,p_t$ such that $\alpha_1,\ldots,\alpha_t>1$.
According to the proof of Lemma \ref{lem:auxiliary}  the path from $0$ to $l$, for certain values $0\leq l\leq n-1$ and $d_1,\ldots,d_t$ that satisfy $i)$ from Theorem \ref{thm:r(n)},
 can be written in the form
\begin{eqnarray*}
d_1 (y_1'+y_1'') + d_2 (y_2'+y_2'') + \cdots + d_t (y_t'+y_t'') \equiv l \pmod {p_{j}^{\alpha_{j}}},
\end{eqnarray*}
for $1\leq j\leq t$. Following the above deduction we conclude that for any $p_j>2$, $t+1\leq j\leq k$ such that $p_j\nmid d_s$, $1\leq s\leq t$, we can find $y_s'$ and $y_s''$ modulo $p_j$ such that $p_j\nmid y_s',y_s''$. Similarly, if $p_j=2$ and $p_j\nmid d_s$ for some $1\leq s\leq t$, then we can not find $y_s'$ and $y_s''$ modulo $p_j$ such that $p_j\nmid y_s',y_s''$. Therefore, we replace those two edges $y_s'+y_s''$ with three edges $y_s'+y_s''+y_s'''$ modulo $p_j$ such that $p_j\nmid y_s',y_s'',y_s'''$.
So, if $n\in 2\N+1$ we can not find $l$ such that we need more than $2t$ summands and for $n\in 2\N$ it is possible by increasing the number of the summands by one.

Finally, if there are no $p_1,\ldots,p_t$ such that $\alpha_1,\ldots,\alpha_t>1$ then there exists a divisor $d_i$ such that $p_u\nmid d_i \Rightarrow\alpha_u=1$ for $1\leq u\leq k$. On the other hand, since there exists at most one prime factor $p_i$ of $n$ such that $\alpha_i=1$, we see that $p_i$ is the only factor dividing $d_i$ (if it exists) and
for all other prime factors it holds that $p_u\mid d_i$, $1\leq u\neq i\leq k$, where $\alpha_u>1$.
According to the proof of Lemma \ref{lem:auxiliary} the path from $0$ to $l$, for certain values $0\leq l\leq n-1$ can be written in the form
\begin{eqnarray*}
\label{eqn:modulo pj^alphaj}
d_1 (y_1'+y_1'') +\cdots+ d_i y_i +\cdots + d_t (y_t'+y_t'') \equiv l \pmod {p_{j}^{\alpha_{j}}},
\end{eqnarray*}
for $1\leq j\leq t$ (the maximal number of summands is equal to $2t-1$ considering to the moduli $p_1,\ldots,p_t$). As we have seen above for any $p_j>2$, $t+1\leq j\leq k$ such that $p_j\nmid d_s$, $1\leq s\neq i\leq t$ we can not increase the number of the summands and for
$p_j=2$, $t+1\leq j\leq k$ such that $p_j\nmid d_s$, $1\leq s\neq i\leq t$ we can increase the number of the summands by one, but we can not reach the value $2t+1$.
 \end{proof}

\begin{remark}
According to $i)$ and $ii)$ of the assertion we may notice that there is a subset $\{d_{f(t+1)},\ldots,d_{f(k)}\}$ of the divisor set $D$ such that there exist exactly two divisors $p_i,p_{f(i)}\nmid d_{f(i)}$ and $p_i,p_{f(i)}\| n$, for $t+1\leq i\leq k$.
Now, if $s(n)=|\{ i \ | \alpha_i= 1,\ 1\leq i\leq k \}|$ we conclude that $2(k-t)\leq s(n)$ and therefore $k-\lfloor{\frac{s(n)}{2}}\rfloor \leq t<k$. So, the assertions $i)$ and $ii)$ hold for the divisor set with cardinality  $k-\lfloor{\frac{s(n)}{2}}\rfloor \leq t<k$ and for the cardinalities $t<k-\lfloor{\frac{s(n)}{2}}\rfloor$ it can be very easily seen that $diam(\ICG_n(d_1,\ldots,d_t))\leq 2t$ if $n\in 2\N+1$ and $diam(\ICG_n(d_1,\ldots,d_t))\leq 2t+1$ if $n\in 2\N$ (the reasoning is analogous to that in the parts $iii)$ and $iv)$).
\end{remark}

\bigskip

Notice that we can neither increase nor find new classes of graphs whose diameters attain $r(n)$ or $r(n)+1$ using Theorems \ref{thm:d,d+1} and \ref{thm:t<k}, where $n$ is the order of the integral circulant graph.
Indeed, if $n'\in 2\N+1$ then the diameter of $\ICG_{n\cdot n'}(d_1,\ldots,d_t)$ remains the same and the potential upper bound $r(n\cdot n')$ can not be attained as $r(n\cdot n')>r(n)$.
Furthermore, if $n'\in 2\N$ then $n\in 2\N+1$ and
$$
diam(\ICG_{n\cdot n'}(d_1,\ldots,d_t))= diam(\ICG_n(d_1,\ldots, d_t))+1\leq r(n)+1\leq r(n\cdot n').
$$
It is easy to see that if $diam(\ICG_{n\cdot n'}(d_1,\ldots,d_t))= r(n\cdot n')$  then $n'=2$ and therefore $n\cdot n'\in 4\N+2$.
However, since $n\cdot n'\in 4\N+2$, according to the part $ii)$ of Theorem \ref{thm:t<k} there exist $d_1',\ldots,d_t'$ such that  $diam(\ICG_{n\cdot n'}(d_1,\ldots, d_t))$ is equal to $r(n\cdot n')+1$.

\smallskip
Now, if $n\cdot n'$ is divisible  by at most one prime factor then new classes of graphs that attain the bounds $2t$ and $2t+1$ can be found in the cases
$n\cdot n'\in 2\N+1$ and $n\cdot n'\in 2\N$, respectively.

On the other hand, if $n$ has at most one divisor $p_i\|n$ and $n\cdot n'$ is divisible by at least two divisors $p_i$ and $p_j$ such that $p_i,p_j\|n\cdot n'$ and  then we have for $n\in 2\N+1$, $n'\in 2\N$ and $D=\{d_1,\ldots,d_t\}$
$$
diam(\ICG_{n\cdot n'}(D))= diam(\ICG_n(D))+1\leq 2t+1\leq 2(k-1)+1\leq r(n)< r(n\cdot n').
$$
Similarly, if $n'\in 2\N+1$ and $n\in 2\N$ we can similarly conclude that \\ $diam(\ICG_{n\cdot n'}(d_1,\ldots,d_t))= diam(\ICG_n(d_1,\ldots, d_t))\leq 2t+1< r(n\cdot n').$
Since  $n\cdot n'$ is divisible by at least two divisors $p_i$ and $p_j$ such that $\alpha_i=\alpha_j=1$, according to $i)$ and $ii)$ of the above assertion there exists a set of the divisors
$d_1',\ldots,d_t'$ such that $diam(\ICG_{n\cdot n'}(d_1',\ldots,d_t'))\geq r(n\cdot n')$ and thus we can neither increase nor find new classes of graphs whose diameters attain maximal values using Theorem \ref{thm:d,d+1}.

Finally, we give the formula for the maximal diameter of an integral circulant graph of a given order $n$ and the set of divisors $D$ with cardinality  $t$
\begin{eqnarray}
\label{eqn:main}
\max(diam(\ICG_{n}(D)))=\left\{ \begin{array}{rl}
r(n),&  t=k \\
r(n)+1,& n\in 4\N+2,\ s(n)\geq 2,\ k-\lfloor{\frac{s(n)}{2}}\rfloor \leq t<k\\
r(n),& n\not\in 4\N+2,\ s(n)\geq 2,\ k-\lfloor{\frac{s(n)}{2}}\rfloor \leq t<k\\
2t+1,& n\in 2\N,\ s(n)\geq 2,\ t<k-\lfloor{\frac{s(n)}{2}}\rfloor \\
2t,& n\in 2\N+1,\ s(n)\geq 2,\ t<k-\lfloor{\frac{s(n)}{2}}\rfloor \\
2t+1,& n\in 2\N,\ s(n)\leq 1,\ t<k \\
2t,& n\in 2\N+1,\ s(n)\leq 1,\ t<k. \\
\end{array}\right.
\end{eqnarray}

In some real-world applications such as construction of a quantum communication network with prescribe order and the largest possible diameter it is good to know
the maximal diameter of integral circulant graphs of a given order $n$.

It is sufficient to check if there exists some $\ICG_n(D)$ whose
maximal diameter is $2|D|+1$ or $2|D|$ and it is greater than $r(n)+1$ or $r(n)$.
From  (\ref{eqn:main}) it follows that diameter of $\ICG_n(D)$ can attain $2|D|+1$ or $2|D|$, if $s(n)\leq 1$ or $s(n)\geq 2$ and
$t<k-\lfloor{\frac{s(n)}{2}}\rfloor$. Now, suppose first that $s(n)\leq 1$. The following chain of inequalities holds
$$
diam(\ICG_n(D))\leq 2|D|+1\leq 2(k-1)+1\leq 2k-s(n)= s(n)+2(k-s(n))=r(n),
$$
and therefore we see that in this case can not find a class of graphs with  diameter greater than $r(n)$.

Now, if $s(n)\geq 2$ and $t<k-\lfloor{\frac{s(n)}{2}}\rfloor$ then we have
$$
diam(\ICG_n(D))\leq 2|D|+1\leq 2(k-\lfloor{\frac{s(n)}{2}}\rfloor-1)+1=2k-(s(n)-1)-1 =r(n),
$$
and therefore we conclude that the maximal diameter is equal to $r(n)$ if $n\not\in 4\N+2$ and $r(n)+1$ if $n\in 4\N+2$.

\section{Conclusion}

In this paper we find the maximal diameter of all integral circulant
graphs of a given order $n$. We also calculate the maximal diameter of integral circulant
graphs of a given order $n$ and cardinality of the divisor set $t\leq k$.
Moreover, we characterize all
$\ICG_n(D)$ such that the maximal diameter is attained.
 Generally, the proofs presented in this paper are based
on the connections between  number and  graph theory
and fall into a good many of distinct cases.

We have already mentioned that the maximal diameter of some class of graphs of a given order plays an important role
if that class is used for modeling a quantum network that allows quantum dynamics, having in mind applications like perfect state transfer.
Therefore, we conclude with a partial result about maximal diameter of integral circulant graphs having perfect state transfer.
We actually prove that diameter of $\ICG_n(D)$ having perfect state transfer such that $|D|\leq k$ can not attain the value $r(n)$.
In a recent paper \cite{Ba10}, complete characterization
of integral circulant graph having perfect state transfer was given
and it was shown that $\ICG_n (D)$ has perfect state transfer if and only if $n\in 4\N$
and $D=\widetilde{D_3}\cup D_2\cup 2D_2\cup 4D_2\cup \{n/2^a\}$,
where $\widetilde{D_3}=\{d\in D\ |\ n/d\in 8\N\}$, $D_2= \{d\in D\
|\ n/d\in 8\N+4\}\setminus \{n/4\}$ and $a\in\{1,2\}$. Now, suppose that there exists $\ICG_n(D)$ of a given order $n$ allowing
perfect state transfer that attains maximal diameter. Since $\ICG_n(D)$ attains maximal diameter we conclude that $D$ satisfies
(\ref{property divisors}) and (\ref{property divisors1}) and therefore there exists $p_i$ not dividing $n/2$, if $n/2\in D$.
This means that $p_i=2$ and hence $n\in 4\N+2$ implying that $\ICG_n(D)$ does not permit perfect state transfer.
Furthermore, if $n/4\in D$ then in the similar way we conclude that $n\in 8\N+4$. On the other hand, for every $d_i\in D_2$ it holds that $2d_i,4d_i\in D$ and the set of prime factors dividing  $2d_i$ and $4d_i$ are the same, which is a contradiction according to (\ref{property divisors}) and (\ref{property divisors1}).
The above discussion suggests that a possible challenging direction in future research would be finding the maximal diameter in the class of integral circulant graphs having perfect state transfer.

On the other hand, we may notice that we can not improve the lower bound in the inequality (\ref{enq:main}) for prescribed $n$ and any prescribed  cardinality of the divisor set $D$.
Indeed, according to Theorem 9 from \cite{klotz07} we observe that $\ICG_n(1)=2$ if and only if $n$ is a power of $2$ or $n$ is odd (in both of the cases $n$ is not prime). This implies that $diam(\ICG_n (1,d_2,\ldots,d_t))=2$, for any $t$ and the mentioned values of $n$ (as long as $\{1,d_2,\ldots,d_t\}\neq D_n$ ).

In the remaining case, for $n=2^{\alpha_1}m$, where $m>1$ is odd and $\alpha\geq 1$, we can prove that $diam(\ICG_n(1,2^{\alpha_1}))=2$. Indeed, for every $0\leq l\leq n-1$, such that $l$ is even we will
prove the existence of $l$ in the  form $s_1+s_2\equiv l \pmod{n}$ such that $\gcd(s_1,n)=1$, $\gcd(s_2,n)=1$.
Now, let $p_i$ be an arbitrary divisor of $n$ such that
$S_{p_i}(n)=\alpha_i$.
By solving the above congruence
equation system modulo $p_i^{\alpha_i}$ we get that $s_1\equiv
l-s_2\not\equiv p_iu\pmod {p_{i}^{\alpha_i}}$ and
$s_2\not\equiv 0\pmod {p_{i}}$ (which is equivalent to
$s_2\not\equiv p_iv\pmod {p_{i}^{\alpha_i}}$), for $0\leq
u,v<p_i^{\alpha_i-1}-1$. Therefore, we obtain $s_2\not\equiv
\{p_iv,l-p_iu\}\pmod {p_{i}^{\alpha_i}}$ and it can be
concluded that  the maximal number of values that $s_2$ modulo $p_i^{\alpha_i}$
can not take is equal to $2p_i^{\alpha_i-1}$.  This
number is  less than the number of residues modulo $p_i^{\alpha_i}$
any $p_i$ (as $l$ is even), so this
system has a solution in this case.

Now, suppose that $l\in 2\N+1$. We find
$s_1$ and $s_2$ such that $s_1+s_2\equiv l \pmod{n}$, $\gcd(s_1,n)=1$ and $\gcd(s_2,n)=2^{\alpha_1}$.
Similarly to the previous discussion, we can conclude that the above conditions can be reduced to the following system
$s_2\not\equiv
\{p_iv,l-p_iu\}\pmod {p_{i}^{\alpha_i}}$, for $2\leq i\leq k$ and $s_2\equiv
0\pmod {2^{\alpha_1}}$. As $p_i>2$ this system has a solution. We finally conclude that
$diam(\ICG_n (1,2^{\alpha_1},d_3,\ldots,d_t))=2$, for any $t$ and the mentioned values of $n$ (as long as $\{1,2^{\alpha_1},d_3,\ldots,d_t\}\neq D_n$).

\section*{Declaration of competing interest}

The authors declare that they have no known competing financial
interests or personal relationships that could have appeared to
influence the work reported in this paper.

\section*{Data availability}

No data was used for the research described in the article.

\section*{Acknowledgments}

Authors gratefully acknowledge support from the Research Project
of the Ministry of Education, Science and Technological Development of the Republic of Serbia (number 451-03-47/2023-01/ 200124).


















\end{document}